\newtheorem{theorem}{Theorem}[section]
\newtheorem{proposition}[theorem]{Proposition}
\newtheorem{corollary}[theorem]{Corollary}
\newtheorem{lemma}[theorem]{Lemma}
\theoremstyle{definition}
\newtheorem{definition}[theorem]{Definition}
\newtheorem{example}[theorem]{Example}
\newtheorem{problem}[theorem]{Problem}
\newtheorem{remark}[theorem]{Remark}
\begin{document}
\title[Entropy and SFT on free semigroups]{On the topological entropy of subshifts of finite type on free semigroups}

\keywords{$G$-SFT, entropy, SNRE, Type $\mathbf{E}, \mathbf{D}, \mathbf{O}, \mathbf{C}$}
\subjclass{Primary 37A35, 37B10, 92B20}

\author{Jung-Chao Ban}
\address[Jung-Chao Ban]{Department of Applied Mathematics, National Dong Hwa
University, Hualien 970003, Taiwan, R.O.C.}
\email{jcban@gms.ndhu.edu.tw}
\author[Chih-Hung Chang]{Chih-Hung Chang*}
\thanks{*To whom correspondence should be addressed}
\address[Chih-Hung Chang]{Department of Applied Mathematics, National
University of Kaohsiung, Kaohsiung 81148, Taiwan, R.O.C.}
\email{chchang@go.nuk.edu.tw}
\date{January 1, 2018}

\begin{abstract}
In this paper, we provide an effective method to compute the topological
entropies of $G$-subshifts of finite type ($G$-SFTs) with $G=F_{d}$ and $%
S_{d}$, the free group and free semigroup with $d$ generators respectively.
We develop the entropy formula by analyzing the corresponding systems of
nonlinear recursive equations (SNREs). Four types of SNREs of $S_{2}$-SFTs,
namely the types $\mathbf{E},\mathbf{D},\mathbf{C}$ and $\mathbf{O}$\textbf{,%
} are introduced and we could compute their entropies explicitly. This
enables us to give the complete characterization of $S_{2}$-SFTs on two
symbols. That is, the set of entropies of $S_{2}$-SFTs on two symbols is
equal to $\mathbf{E}\cup \mathbf{D}\cup \mathbf{C}\cup \mathbf{O}$. The
methods developed in $S_{d}$-SFTs will also be applied to the study of the
entropy theory of $F_{d}$-SFTs. The entropy formulae of $S_{d}$-, $F_{d}$%
-golden mean shifts and $k$-colored chessboards are also presented herein.
\end{abstract}
\maketitle

\baselineskip=1.3 \baselineskip

\section{Introduction}

A classical dynamical system is a pair $(X,T)$ in which $X$ is a phase space
and $T:\mathbb{R}^{+}\times X\rightarrow X$ is a function that describes the
evolution of elements of $X$. If a dynamical system is hyperbolic, the
symbolic dynamical system is an essential and useful tool to investigate the
original system \cite{Bowen-1975, Ruelle-1978}. That is, there is a
partition $\mathcal{P}=\{P_{1},\ldots ,P_{m}\}$ of $X$ and a \emph{coding
map }$\pi :X\rightarrow \{1,\ldots ,m\}^{\mathbb{N}\cup \{0\}}$ such that $%
\sigma \circ \pi =\pi \circ T$, where $\sigma :(\xi _{n})_{n=0}^{\infty
}\longmapsto (\xi _{n+1})_{n=0}^{\infty }$ is the \emph{shift map} on $\pi
(X)$. One could gain almost all information of $(X,T)$ from $(\pi (X),\sigma
)$. The system $(\pi (X),\sigma )$ is also called a $\mathbb{Z}^{1}$\emph{%
-symbolic dynamical system}. A natural extension of $\mathbb{Z}^{1}$%
-symbolic dynamical system is the $G$\emph{-dynamical system} which is a
pair $(X,T)$ such that $X$ is a topological space and $T:G\times
X\rightarrow X$ is a group action on $X$. These spaces appear naturally as
discrete versions of dynamical systems. A pair $(X,\sigma )$ is called a $%
\emph{G}$\emph{-}$\emph{subshift}$ it there exists a finite set $\mathcal{A}$
such that $X$ is a closed subset of $\mathcal{A}^{G}$ and $\sigma :G\times
X\rightarrow X$ is defined by $\left( \sigma ^{g}x\right) (h)=x(g^{-1}h)$. A 
$G$-dynamical system is topologically conjugate to a $G$-subshift if and
only if it is zero-dimensional and expansive ($G=\mathbb{Z}^{1}$ \cite%
{hedlund1969endomorphisms}, general group \cite{lemp2017shift}), it is
therefore of interest to study the $G$-subshifts. The aim of this paper is
to investigate the case of $F_{d}$- and $S_{d}$-subshifts, where $F_{d}$ and 
$S_{d}$ are the free group and free semigroup, respectively.

Let $F_{d}$ be a free group that is generated by $S=\{s_{1},\ldots ,s_{d}\}$
and $\mathcal{A}$ is a symbol set which is finite. A $\emph{coloring}$ is a
function $f:F_{d}\rightarrow \mathcal{A}$ and a set $\mathcal{F}\subset 
\mathcal{A}\times S\times \mathcal{A}$ is called \emph{forbidden transitions}. A
coloring $f$ \emph{contains }$(a,s_{i},b)\in \mathcal{F}$ if there exists $%
g\in F_{d}$ such that $f(g)=a$ and $f(gs_{i})=b$. Denote by $X_{\mathcal{F}}$
the set of colorings which do not contain any block in $\mathcal{F}$. Since $%
\mathcal{F}$ is finite, we call $(X_{\mathcal{F}},\sigma )$ a $G$\emph{-SFT}%
, where the action $\sigma $ is given by $\left( \sigma _{i}f\right)
(x)=f(s_{i}^{-1}x)$ for $x\in F_{d}$, $1\leq i\leq d$. A \emph{pattern }with
support $F\subset F_{d}$ is an element $p\in \mathcal{A}^{F}$ and we write
supp$(p)=F$. Let $n\in \mathbb{N}$, we denote by $E_{n}$ the set of elements
in $G$ whose length are less than or equal to $n$ and denote by $B_{n}(X_{%
\mathcal{F}})$ the set of all possible colorings of $X_{\mathcal{F}}$ in $%
E_{n}$. The \emph{topological entropy} (\emph{entropy} for short) of $X_{%
\mathcal{F}}$ is defined as 
\begin{equation}
h(X_{\mathcal{F}})=\limsup_{n\rightarrow \infty }\frac{\ln \left\vert B_{n}(X_{%
\mathcal{F}})\right\vert }{\left\vert E_{n}\right\vert },  \label{17}
\end{equation}%
where the limit (\ref{17}) exists for $G=S_d$ due to the recent result of
Petersen-Salama (Theorem 5.1 \cite{PS-2017complexity}). It is an interesting
problem to find the explicit value or formula of (\ref{17}). For the case
where $G=\mathbb{Z}^{d}$ with $d=1$, the entropy formulae of $\mathbb{Z}^{1}$%
-SFTs and its algebraic characterization are given by D. Lind \cite%
{Lind-ETaDS1984, LM-1995}. Precisely, the nonzero entropies of $\mathbb{Z}^{1}$-SFTs
are exactly the non-negative rational multiples of logarithm of Perron
numbers. (A Perron number is a real algebraic integer greater than 1 and
greater than the modulus of its algebraic conjugates.) For $d\geq 2$,
Hochman-Meyerovitch \cite{HM-AoM2010a} indicate that the entropy of a $%
\mathbb{Z}^{d}$-SFT is \emph{right recursively enumerable}, i.e., it is the
infimum of a monotonic recursive sequence of rational numbers. 
However, what is still lacking is the
general entropy formulae of $\mathbb{Z}^{d}$-SFTs for $d\geq 2$. Some
approximation algorithms for the entropies of $\mathbb{Z}^{d}$-SFTs can be
found in \cite{friedland-2003, MP-ETaDS2013, MP-SJoDM2013}.

Our goal is to find the entropy formulae of $S_{d}$- and $F_{d}$-SFTs. For
an $S_{d}$-SFT $X_{\mathcal{F}}$, the number $\left\vert B_{n}(X_{\mathcal{F}%
})\right\vert $ satisfies the so-called \emph{system of nonlinear recursive
equation }\cite{ban2017tree} (SNRE, defined in Section 2), thus the
computation of (\ref{17}) is to solve its corresponding SNRE. The main
difficulty in carrying out this approach is that $\left\vert B_{n}(X_{%
\mathcal{F}})\right\vert $ behaves approximately like $\lambda _{1}\lambda
_{2}^{\kappa ^{n}}$ for some $\lambda _{1},\lambda _{2}$ and $\kappa \in 
\mathbb{R}$. That is,%
\begin{equation}
\left\vert B_{n}(X_{\mathcal{F}})\right\vert \approx \lambda _{1}\lambda
_{2}^{\kappa ^{n}},  \label{18}
\end{equation}%
the number $\ln \kappa $ is called the \emph{degree }of $X_{\mathcal{F}}$
and one can derive it via the following formula $\lim_{n\rightarrow \infty }%
\frac{\ln ^{2}\left\vert B_{n}(X_{\mathcal{F}})\right\vert }{n}$, where $\ln
^{2}=\ln \circ \ln $. If $\kappa =d$, we easily obtain that $h(X_{\mathcal{F}%
})=\ln \lambda _{2}$. (\ref{18}) can be interpreted in the following two
perspectives. \textbf{1. (symbols and generators)}: If $d=1$, we consider
the one-sided $\mathbb{Z}^{1}$-subshifts, it is known that $\left\vert
B_{n}(X_{\mathcal{F}})\right\vert \approx \lambda _{1}\lambda _{2}^{n}$ and
one can regard such formulation as that we could use $\lambda _{2}$ (in
average) colors to \emph{fill up }the elements of $E_{n}$ in $\mathbb{Z}^{1}$%
, e.g., if $X$ is a full shift with $\mathcal{A}=\{1,2\}$, then $\left\vert
B_{n}(X_{\mathcal{F}})\right\vert =2^{n}$ while $\left\vert E_{n}\right\vert
=n$. Similarly, if $G=S_{d}$, the formula (\ref{18}) represents that we
could use $\lambda _{2}$ colors to fill up the elements of $E_{n}$ in $%
S_{\kappa }$ (note that $\left\vert E_{n}\right\vert \approx d^{n}$ in $S_{d}
$). Thus, it can be symbolized as $\left\vert B_{n}(X_{\mathcal{F}%
})\right\vert \approx \lambda _{1}\lambda _{2}^{\kappa ^{n}}\approx
\left\vert \mathcal{A}\right\vert ^{\left\vert E_{n}\right\vert }$. The
important point to note here is that the value $\kappa $ is not always an
integer. In fact, we show that $\{\ln \kappa :\ln \kappa $ is a degree of an 
$S_{d}$-SFT$\}=\{\frac{1}{p}\ln \lambda :\lambda \in \mathcal{P},$ $p\geq 1\}
$, where $\mathcal{P}$ stands for the set of Perron numbers \cite%
{ban2017characterization}; \textbf{2. (speed)}: It can be easily seen that
the value $\kappa $ (resp. $\lambda _{2}$ and $\lambda _{1}$) indicates the
highest (resp. the middle and the lowest) speed of $\left\vert B_{n}(X_{%
\mathcal{F}})\right\vert $ in (\ref{18}). The more detailed information of
the tuple $(\kappa ,\lambda _{2},\lambda _{1})\in \mathbb{R}^{3}$ we know,
the more explicit value of $\left\vert B_{n}(X_{\mathcal{F}})\right\vert $
we obtain.

We will restrict our attention to the entropy formulae of $S_{d}$-SFTs. Due
to the equivalence of the studies of $\left\vert B_{n}(X_{\mathcal{F}%
})\right\vert $ and SNRE, we embark our study on analyzing all kinds of
SNREs. Various types of SNREs, namely, the\emph{\ equal growth}, \emph{%
dominating}, \emph{oscillating} and \emph{cooperate types} (write $\mathbf{E}
$, $\mathbf{D}$, $\mathbf{O}$, $\mathbf{C}$ respectively) are presented in
Section 3, and the entropy formulae of these types are presented therein.
Furthermore, we give the complete characterization of $h(X_{\mathcal{F}})$
of $(d,k)=(2,2)$. That is, all the SNREs of $S_{2}$-SFTs with $\mathcal{A}%
=\{1,2\}$ are equal to $\mathbf{E}\cup \mathbf{D}\cup \mathbf{O}\cup \mathbf{%
C}$.

The \emph{hom-shifts}, which is an important class of $G$-SFTs, are defined
in \cite{chandgotia2016mixing} and motivated from the mathematical and
statistical physics. A hom-shift can be described as a nearest neighborhood
SFT with the `symmetric' and `isotropic' properties. That is, if $%
(a,s_{i},b)\in \mathcal{F}$ for some $1\leq i\leq d$ then $(a,s_{j},b)\in 
\mathcal{F}$ for all $1\leq j\leq d$. If $\mathcal{A}=\{1,2\}$ and $G=%
\mathbb{Z}^{2}$, $X_{\mathcal{F}}$ is the well-known \emph{two-dimensional
golden mean shift }$(\mathbb{Z}^{2}$-$GMS)$ if 
\begin{equation}
\mathcal{F}=\{(2,s_{i},2):i=1,2\}\text{.}  \label{19}
\end{equation}%
It is obvious that a $\mathbb{Z}^{2}$\emph{-GMS }is a hom-shift and its
entropy $h(X_{\mathcal{F}})$ is called the $\emph{hard}$ $\emph{square}$ $%
\emph{entropy}$ $\emph{constant}$. Either finding a closed form for $h(X_{%
\mathcal{F}})$ or to determine whether this number is algebraic is still an
open problem. Two important types of hom-shifts on $S_{d}$, the $S_{d}$-GMS
and the $k$-colored\emph{\ chessboard} on $S_{d}$, are introduced and their
entropy formulae are established in Section 4. Recently, Petersen-Salama 
\cite{PS-2017complexity} find upper and lower bounds of a hom-shift by means
of the topological entropies of one-dimensional SFTs, and such result gives
the relation between $S_{d}$-SFTs and $\mathbb{Z}^{1}$-SFTs. If $X_{\mathcal{%
F}}$ is an $F_{d}$-GMS on two symbols, i.e., $\mathcal{F}=(\ref{19})$,
Piantadosi \cite{piantadosi2008symbolic} studies the tuple $(\kappa ,\lambda
_{2},\lambda _{1})\in \mathbb{R}^{3}$ of (\ref{18}) and shows that $\kappa
=2d-1$, $\lambda _{2}\approx 0.909155$. Meanwhile, the value $\lambda _{1}$
is not well-defined. It is unclear that whether there is some lower-order indicator between $\lambda_1$ and $\lambda_2$. The results of $S_{d}$-SFTs developed in Section 3 can
be applied to find the entropy of $F_{d}$-GMS, which provides a different
approach than \cite{piantadosi2008symbolic}. Some open problems on the
entropy theory of $F_{d}$- or $S_{d}$-SFTs are illustrated in Section 5.

\section{Preliminaries}

In this section we set up the notations of $S_{d}$-SFTs and present some
known results. Some notations will be adjusted to meet the conditions of
previous work in \cite{ban2017tree}. Let $\mathcal{A}=\{1,\ldots ,k\}$ be
the symbol set. Suppose $S_{d}$ is a free semigroup with $d$ generators,
then it is also a $d$-tree. Thus, the shift space $(\mathcal{A}%
^{S_{d}},\sigma )$ is also called a \emph{full} \emph{tree-shift} in \cite%
{AB-TCS2012, ban2017tree}. Let $\Sigma =\{1,\ldots ,d\}$ and $\Sigma ^{\ast
}=\cup _{n\geq 0}\Sigma ^{n}$ be the union of finite words over $\Sigma $,
where $\Sigma ^{n}=\{w_{1}w_{2}\cdots w_{n}:w_{i}\in \Sigma $ for $1\leq
i\leq n\}$ is the collection of words of length $n$ for $n\in \mathbb{N}$
and $\Sigma ^{0}=\{\epsilon \}$ consists of the empty word $\epsilon $. A 
\emph{coloring} (also call a \emph{labeled tree}) in $\mathcal{A}^{S_{d}}$
is a function $f:\Sigma ^{\ast }\rightarrow \mathcal{A}$. For $%
w=w_{0}w_{1}\ldots w_{n-1}\in \Sigma ^{\ast }$, define $\sigma _{w}=\sigma
_{n-1}\cdots \sigma _{1}\sigma _{0}$ and $\left( \sigma _{i}f\right)
(w)=f(iw)$ for $i=1,\ldots ,d$. Suppose $n\in \mathbb{N}\cup \{0\}$, $\Sigma
_{n}=\cup _{k=0}^{n}\Sigma ^{k}$ denotes the set of words of length at most $%
n$, we call a function $u:\Sigma _{n}\rightarrow \mathcal{A}$ is an $n$\emph{%
-block }and write height$(u)=n$. The $n$-block $u$ can be written as $%
u=(u_{\epsilon },\sigma _{1}u,\ldots ,\sigma _{d}u)$, where $\sigma _{i}u$
is the $\left( n-1\right) $-block for $i=1,\ldots ,d$. Let $\mathcal{B}$ be
the collection of $2$-blocks, we use the notation $u=(i,i_{1},\ldots ,i_{d})$
to denote $u\in \mathcal{B}$. Given the forbidden set $\mathcal{F}$, one can
construct the associated set of \emph{admissibl}$\emph{e}$ $2$-blocks (also
call \emph{basic set}) as $\mathcal{B}=\cup _{i=1}^{k}\mathcal{B}^{(i)}$,
where $\mathcal{B}^{(i)}=\{(i,i_{1},\ldots ,i_{d}):(i,s_{j},i_{j})\notin 
\mathcal{F}$, $1\leq j\leq d\}$. Let $X^{\mathcal{B}}$ $(=X_{\mathcal{F}})$
be the $S_{d}$-SFT constructed by $\mathcal{B}$.

\begin{example}
\label{Ex: 1}Let $d=2$, $\mathcal{A}=\{1,2\}$ and $\mathcal{F}%
=\{(2,s_{i},2):i=1,2\}$. The corresponding basic set is $\mathcal{B}=\cup
_{i=1}^{2}\mathcal{B}^{(i)}$, where 
\begin{equation*}
\mathcal{B}^{(1)}=\{(1,1,1),(1,1,2),(1,2,1),(1,2,2)\}\text{ and }\mathcal{B}%
^{(2)}=\{(2,1,1)\}\text{.}
\end{equation*}%
Then $X^{\mathcal{B}}=X_{\mathcal{F}}$ is the $S_{2}$-GMS.
\end{example}

For an $X^{\mathcal{B}}$, the system of nonlinear recursive equations is a
useful tool to analyze the growth behavior of $\left\vert B_{n}(X^{\mathcal{B%
}})\right\vert $ \cite{ban2017tree}. Denote $\Delta =\{\delta _{1},\delta
_{2},\ldots ,\delta _{k}\}$ and $\Delta _{n}^{d}=\{\delta
_{1;n}^{i_{1}}\cdots \delta _{k;n}^{i_{k}}:i_{1}+\cdots +i_{k}=d\}$ for $%
n\geq 1$.

\begin{definition}
\label{Def: 1}

\begin{enumerate}
\item Let $F=\sum_{\mathbf{a}\in \Delta _{n}^{d}}f_{\mathbf{a}}\mathbf{a}$,
where $f_{\mathbf{a}}\in \mathbb{Z}^{+}$ and $\mathbf{a}\in \Delta _{n}^{d}$%
. The vector $v_{F}=(f_{\mathbf{a}})_{\mathbf{a}\in \Delta _{n}^{d}}$ is
called the \emph{indicator vector }of $F$.

\item A sequence $\{\delta _{1;n},\ldots ,\delta _{k;n}\}_{n\in \mathbb{N}}$
is defined by a \emph{system of nonlinear recursive equation} (SNRE) of
degree $(d,k)$ if 
\begin{equation*}
\delta _{i;n}=F^{(i)}=F^{(i)}(\delta _{1;n-1},\delta _{2;n-1},\ldots ,\delta
_{k;n-1}),\text{ }n\geq 2,\text{ }1\leq i\leq k,
\end{equation*}%
and $\delta _{i;1}=\delta ^{(i)}$ for $1\leq i\leq k$, where $F^{(1)},\ldots
,F^{(k)}$ are combinations of $\Delta _{n-1}^{d}$ over $\mathbb{Z}^{+}$,
respectively. We also call $F=\{F^{(1)},\ldots ,F^{(k)}\}$ an SNRE.
\end{enumerate}
\end{definition}

For $1\leq i\leq k$ and $n\in \mathbb{N}$, let $\gamma _{i;n}$ be the number
of $n$-blocks of $X^{\mathcal{B}}$ with the root $\epsilon $ is colored by
the symbol $i$. Denote by $\eta _{n}$ a bijection from $\mathcal{A}$ to $%
\{\gamma _{1;n},\ldots ,\gamma _{k;n}\}$ with $\eta _{n}(i)=\gamma _{i;n}$
and observe that $\gamma _{i;0}=1$ for $1\leq i\leq k$ and $n\geq 1$. The
following result indicates that the number $\left\vert B_{n}(X^{\mathcal{B}%
})\right\vert $ can be described by its corresponding SNRE.

\begin{theorem}[Ban-Chang 2017 \protect\cite{ban2017tree}]
\label{Thm: 4}Given a basic set $\mathcal{B}$ and the corresponding $S_{d}$%
-SFT $X^{\mathcal{B}}$. Then the values $\{\gamma _{i;n}\}_{n=1}^{k}$
satisfies the following SNRE.%
\begin{equation}
\left\{ 
\begin{array}{c}
\gamma _{i;n}=\sum\limits_{(i,i_{1},\ldots ,i_{d})\in \mathcal{B}%
}\prod_{j=1}^{d}\eta _{n-1}(i_{j})\text{,} \\ 
\gamma _{i;1}=\sum\limits_{(i,i_{1},i_{2},\ldots ,i_{d})\in \mathcal{B}%
}\prod_{j=1}^{d}\eta _{0}(i_{j}) \\ 
=\left\vert B_{1}(X_{i}^{\mathcal{B}})\right\vert \text{, }1\leq i\leq k%
\text{,}%
\end{array}%
\right.  \label{20}
\end{equation}%
where $X_{i}^{\mathcal{B}}=\{f\in \mathcal{A}^{S_{d}}:f(\epsilon )=i\}$.
\end{theorem}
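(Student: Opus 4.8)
The plan is to count the $n$-blocks of $X^{\mathcal{B}}$ rooted at a fixed symbol by decomposing each block according to its local structure at the root, and then recognize that this decomposition is exactly the claimed SNRE. First I would recall the structural identity for $n$-blocks noted just before the theorem statement: any $n$-block $u$ with $\mathrm{height}(u)=n$ can be written as $u=(u_\epsilon,\sigma_1 u,\ldots,\sigma_d u)$, where $u_\epsilon\in\mathcal{A}$ is the color at the root and each $\sigma_j u$ is an $(n-1)$-block sitting at the child indexed by $j$. The central observation is that $u$ is admissible for $X^{\mathcal{B}}$ if and only if (i) the root together with the colors of its $d$ children forms an admissible $2$-block, i.e.\ $(u_\epsilon,i_1,\ldots,i_d)\in\mathcal{B}$ where $i_j=(\sigma_j u)(\epsilon)$ is the color at the root of the $j$-th subtree, and (ii) each subtree $\sigma_j u$ is itself an admissible $(n-1)$-block of $X^{\mathcal{B}}$ rooted at $i_j$. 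This is precisely the point where the local (nearest-neighbor) nature of an SFT on the free semigroup is used: admissibility is a purely local condition on parent-child pairs, so it factors through the root-child $2$-block and the independent admissibility of the $d$ subtrees.

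With this decomposition in hand, the counting step is a straightforward application of the multiplication and addition principles. Fixing the root color to be $i$, the admissible $n$-blocks rooted at $i$ are in bijection with the set of tuples
\begin{equation*}
\big\{\,(\sigma_1 u,\ldots,\sigma_d u)\ :\ (i,i_1,\ldots,i_d)\in\mathcal{B},\ \sigma_j u\in B_{n-1}(X_{i_j}^{\mathcal{B}})\,\big\}.
\end{equation*}
Because the subtrees $\sigma_1 u,\ldots,\sigma_d u$ are chosen \emph{independently} once the admissible $2$-block $(i,i_1,\ldots,i_d)$ is fixed, the number of such tuples for a given $2$-block is the product $\prod_{j=1}^d \gamma_{i_j;n-1}$, where $\gamma_{i_j;n-1}$ is by definition the number of $(n-1)$-blocks rooted at $i_j$. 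Summing over all admissible $2$-blocks $(i,i_1,\ldots,i_d)\in\mathcal{B}$ gives
\begin{equation*}
\gamma_{i;n}=\sum_{(i,i_1,\ldots,i_d)\in\mathcal{B}}\prod_{j=1}^{d}\gamma_{i_j;n-1},
\end{equation*}
and rewriting $\gamma_{i_j;n-1}=\eta_{n-1}(i_j)$ via the bijection $\eta_{n-1}$ yields the first line of \eqref{20} exactly.

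For the base case I would verify the formula at $n=1$ directly. A $1$-block rooted at $i$ is determined by the root color $i$ together with the colors of its $d$ children, subject only to $(i,i_1,\ldots,i_d)\in\mathcal{B}$; since each child is a $0$-block and $\gamma_{i_j;0}=\eta_0(i_j)=1$ for every symbol, the product $\prod_{j=1}^d \eta_0(i_j)$ equals $1$ for each admissible $2$-block, so the sum simply counts the admissible $2$-blocks with root $i$. This is manifestly $|B_1(X_i^{\mathcal{B}})|$, giving the second line of \eqref{20}. Finally, a clean induction on $n$ using the recursion derived above propagates the counting interpretation of $\gamma_{i;n}$ to all $n\geq 1$.

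I do not expect any single step to present a serious obstacle; the theorem is essentially a bookkeeping statement that repackages the recursive structure of labeled $d$-trees. The one place that warrants care is the independence claim underlying the product $\prod_{j=1}^d \gamma_{i_j;n-1}$: one must confirm that an SFT admissibility constraint never couples two distinct subtrees of the same vertex. This follows because the forbidden set $\mathcal{F}\subset\mathcal{A}\times S\times\mathcal{A}$ only constrains individual parent-child edges $(a,s_j,b)$, so no constraint relates a vertex in $\sigma_j u$ to a vertex in $\sigma_{j'} u$ for $j\neq j'$ except through their common parent, which is already accounted for by fixing the $2$-block. Making this edge-locality explicit is the crux; once it is in place, the recursion and the base case are immediate.
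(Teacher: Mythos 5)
Your proof is correct and is exactly the argument the paper intends: the paper omits a proof of Theorem \ref{Thm: 4} (deferring to \cite{ban2017tree}) and instead gives Example \ref{Ex: 2}, which instantiates precisely your decomposition $u=(u_\epsilon,\sigma_1u,\ldots,\sigma_du)$, the factorization of admissibility through the root $2$-block plus independent subtrees, and the resulting sum-of-products recursion. Your identification of edge-locality of $\mathcal{F}$ as the point justifying independence of the $d$ subtrees is the right crux, and the base case matches the paper's observation that $\gamma_{i;0}=1$.
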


Instead of proving Theorem \ref{Thm: 4} we give an example below so that our
exposition is self-contained.

\begin{example}[Continued]
\label{Ex: 2}Let $X^{\mathcal{B}}$ be defined as in Example \ref{Ex: 1}.
Since $\mathcal{B}^{(1)}=\{(1,1,1),(1,1,2),(1,2,1),(1,2,2)\}$, we obtain a
recursive formula for $\gamma _{1;n}$ by (\ref{20}) as follows. 
\begin{equation*}
\gamma _{1;n}=\left( \gamma _{1;n-1}\right) ^{2}+2\gamma _{1;n-1}\gamma
_{2;n-1}+\left( \gamma _{2;n-1}\right) ^{2}\text{.}
\end{equation*}%
Similarly, since $\mathcal{B}^{(2)}=\{(2,1,1)\}$, we have $\gamma
_{2;n}=\left( \gamma _{1;n-1}\right) ^{2}$. The SNRE of $X^{\mathcal{B}}$ is
as follows. 
\begin{equation*}
\left\{ 
\begin{array}{c}
\gamma _{1;n}=\left( \gamma _{1;n-1}\right) ^{2}+2\gamma _{1;n-1}\gamma
_{2;n-1}+\left( \gamma _{2;n-1}\right) ^{2}\text{,} \\ 
\gamma _{2;n}=\left( \gamma _{1;n-1}\right) ^{2}\text{,} \\ 
\gamma _{1;1}=4\text{ and }\gamma _{2;1}=1\text{.}%
\end{array}%
\right.
\end{equation*}
\end{example}

Let $\Gamma =\{\gamma _{1},\ldots ,\gamma _{k}\}$ and $\Gamma _{n}=\{\gamma
_{1;n},\ldots ,\gamma _{k;n}\}$ for $n\geq 1$. Suppose $F$ is an SNRE. For $%
a,b\in \Gamma $, we say that $a$\emph{\ induces }$b$ (or $b$\emph{\ appears
in }$a$), write $a\rightarrow b$, if $b_{n-1}$ appears in some term of $a_{n}
$. Precisely, $a_{n}=f_{n-1}b_{n-1}+$ $g_{n-1}$, where $f_{n-1}$ is a
combination of symbols $\Gamma _{n-1}\backslash \{b_{n-1}\}$. We say that $a$
\emph{connects to} $b$ if there exists a path $a=c_{0},c_{1},c_{2},\ldots
,c_{m}=b$ such that $c_{i}\rightarrow c_{i+1}$ for $i=0,\ldots ,m-1$. Thus
we can decompose the symbols $\Gamma =\Gamma ^{1}\cup \Gamma ^{2}\cup \cdots
\cup \Gamma ^{r}$ as for each $a,b\in \Gamma ^{i}$, either $a$ connects to $b
$ or $b$ connects to $a$. Let $a\in \Gamma $. We say that $a$ is an \emph{%
essential symbol }if there exists an integer $n\in \mathbb{N}$ such that $%
a_{n}\geq 2$ and call $a$ \emph{inessential }if it is not essential, i.e., $%
a_{n}=1$ for all $n\in \mathbb{N}$. We use the notation $\mathcal{E}(F)$
(resp. $\mathcal{I}(F)$) to denote the set of the essential symbols (resp.
inessential symbols) of $F$.

\begin{lemma}
\label{Lma: 3}Let $F$ be an SNRE. If $b\in \mathcal{E}(F)$ and $a\rightarrow
b$, then $a\in \mathcal{E}(F)$.
\end{lemma}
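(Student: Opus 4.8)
The plan is to show that essentiality propagates backward along an arrow $a\to b$, by exploiting that the defining relation $a_n=f_{n-1}b_{n-1}+g_{n-1}$ displays $b_{n-1}$ as a factor inside a genuine summand of $a_n$. The one quantitative ingredient needed is that every count is at least $1$; granting this, a single large value $b_m\ge 2$ is transported into $a_{m+1}\ge 2$, which is exactly the essentiality of $a$.

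First I would record the positivity $\gamma_{i;n}\ge 1$ for every symbol $i$ and every $n\ge 0$. This is immediate by induction: $\gamma_{i;0}=1$ by hypothesis, and (\ref{20}) writes $\gamma_{i;n}$ as a sum, with positive integer coefficients, of products $\prod_{j=1}^{d}\eta_{n-1}(i_j)$ whose factors are each $\ge 1$ by the inductive hypothesis, so $\gamma_{i;n}\ge 1$ whenever $\mathcal{B}^{(i)}\ne\varnothing$. Symbols with $\mathcal{B}^{(i)}=\varnothing$ give the constant sequence $0$; these are neither essential nor can they induce an essential symbol, so I would discard them at the outset and thereafter assume every count is $\ge 1$.

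Next, since $b\in\mathcal{E}(F)$, I fix $m\in\mathbb{N}$ with $b_m\ge 2$ (note $m\ge 1$, as $b_0=1$). Because $a\to b$, the structural relation $a_{m+1}=f_m b_m+g_m$ holds, where $f_m$ is a combination, with positive integer coefficients, of products of symbols from $\Gamma_m\setminus\{b_m\}$ (the cofactors of $b_m$ in the monomials of $a_{m+1}$ containing it) and contains at least one such term, while $g_m$ is a sum of further nonnegative terms. By the positivity above, $f_m\ge 1$ and $g_m\ge 0$, whence
\[
a_{m+1}=f_m b_m+g_m\ \ge\ f_m b_m\ \ge\ 1\cdot 2\ =\ 2 .
\]
Thus $a_{m+1}\ge 2$, so $a\in\mathcal{E}(F)$. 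Equivalently, one may argue by contraposition: if $a$ is inessential then $a_n=1$ for all $n$, and in $a_n=f_{n-1}b_{n-1}+g_{n-1}$ the inequalities $g_{n-1}\ge 0$ and $f_{n-1}b_{n-1}\ge b_{n-1}\ge 1$ force $g_{n-1}=0$ and $b_{n-1}=1$ at every level, so $b$ is inessential as well.

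I expect the only real obstacle to be the bookkeeping around positivity rather than the final inequality. The relation $a\to b$ is \emph{syntactic}, so one must ensure that the summand of $a_n$ exhibiting $b_{n-1}$ does not secretly vanish because one of its cofactors is a zero count; this is precisely why the preliminary step of establishing $\gamma_{i;n}\ge 1$ (and pruning the identically-zero symbols) is essential. A minor point to confirm is that the decomposition $a_n=f_{n-1}b_{n-1}+g_{n-1}$ is valid at the particular level $n=m+1$, which is automatic because the arrow $a\to b$ reflects the fixed combinatorial data $\mathcal{B}$ and therefore holds at every $n\ge 2$.
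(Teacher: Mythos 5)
Your proof is correct and follows essentially the same route as the paper's: use the syntactic decomposition $a_{m+1}=f_m b_m+g_m$ furnished by the arrow $a\to b$, together with positivity of the counts, to transport $b_m\ge 2$ into $a_{m+1}\ge f_m b_m\ge 2$. You are in fact slightly more careful than the paper, which writes $a_k=f_{k-1}b_{k-1}+g_{k-1}\ge 2$ directly from $b_k\ge 2$ (an off-by-one that your shift to level $m+1$ silently corrects), and you make explicit the positivity $\gamma_{i;n}\ge 1$ and the pruning of identically-zero symbols that the paper leaves implicit.
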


\begin{proof}
Since $b\in \mathcal{E}(F)$, there exists $k$ such that $b_{k}\geq 2$.
Furthermore, $a\rightarrow b$ means that $a_{n}=f_{n-1}b_{n-1}+g_{n-1}$.
Thus $a_{k}=f_{k-1}b_{k-1}+g_{k-1}\geq 2$. This completes the proof.
\end{proof}

\begin{theorem}
Given an SNRE $F$, there exists an algorithm to find $\mathcal{E}(F)$ and $%
\mathcal{I}(F)$. Such an algorithm halts up to $k$-steps.
\end{theorem}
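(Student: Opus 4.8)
The plan is to exhibit an explicit algorithm and to argue that the essential/inessential dichotomy propagates along the induction relation $\rightarrow$, so that a single pass of ``backward saturation'' over the $k$ symbols suffices. First I would fix the data: the SNRE $F$ has $k$ symbols $\gamma_1,\ldots,\gamma_k$, and from the defining equations one reads off the directed graph on $\Gamma$ with an edge $a\rightarrow b$ whenever $b_{n-1}$ appears in some term of the polynomial defining $a_n$. The key structural fact is Lemma~\ref{Lma: 3}: if $a\rightarrow b$ and $b\in\mathcal{E}(F)$, then $a\in\mathcal{E}(F)$. In graph language this says that the set $\mathcal{E}(F)$ is closed under taking predecessors, equivalently that $\mathcal{I}(F)$ is closed under taking successors. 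This is exactly what makes a greedy saturation correct.

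Next I would describe the algorithm. Initialize a set $E_0$ consisting of all symbols $a$ that are ``directly essential,'' i.e.\ for which the polynomial $F^{(a)}$ defining $a_n$ is manifestly $\geq 2$ for some $n$ --- concretely, those $a$ whose defining equation has a coefficient sum exceeding $1$ or a product of at least two factors, or whose initial value already satisfies $a_m\geq 2$. Then iterate the rule: whenever $b\in E_j$ and $a\rightarrow b$, adjoin $a$ to form $E_{j+1}$. By Lemma~\ref{Lma: 3} every element added is genuinely essential, so $E_j\subseteq\mathcal{E}(F)$ at every stage; and conversely any essential symbol either is directly essential or induces an essential symbol, so it is eventually captured. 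I would set $\mathcal{E}(F)=\bigcup_j E_j$ and $\mathcal{I}(F)=\Gamma\setminus\mathcal{E}(F)$.

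For the termination bound, I would note that $E_0\subseteq E_1\subseteq\cdots\subseteq\Gamma$ is a monotone chain of subsets of a $k$-element set, and each iteration that does not halt must add at least one new symbol. Hence the process stabilizes after at most $k$ steps, which is the claimed bound. The one point that needs care --- and which I expect to be the main obstacle --- is justifying the initialization step, namely that ``directly essential'' is decidable in finitely many operations. A priori the condition $a_n\geq 2$ for some $n$ involves checking infinitely many $n$; the resolution is that the structure of the defining polynomial (a sum over $\Delta_{n-1}^d$ with positive-integer coefficients, evaluated on sequences that are all $\geq 1$) forces monotone-in-$n$ growth, so that $a_n\geq 2$ for some $n$ iff it already holds at the smallest relevant index, making the test a finite inspection of coefficients and initial values. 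I would make this precise by observing that all $\gamma_{i;n}\geq 1$ and that the equations are monotone, so essentiality of $a$ is detected either immediately from its own equation or is forced through the finite chain of predecessors, keeping the whole procedure within $k$ steps.
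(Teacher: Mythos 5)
Your overall strategy is the same as the paper's: seed a set with the symbols already known to be essential, saturate backwards along the relation $a\rightarrow b$ using Lemma \ref{Lma: 3} for soundness, and bound the number of iterations by $\left\vert \Gamma \right\vert =k$ since the sets grow monotonically. The termination argument and the role of Lemma \ref{Lma: 3} are exactly right.

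The problem is your initialization. You place in $E_0$ every symbol whose defining equation contains ``a product of at least two factors.'' That clause is not a valid witness of essentiality: a monomial of degree $d\geq 2$ in inessential variables is still equal to $1$. Concretely, take $(d,k)=(2,2)$ and $\mathcal{B}=\{(1,2,2),(2,1,1)\}$, so that $a_{n}=b_{n-1}^{2}$, $b_{n}=a_{n-1}^{2}$, $a_{1}=b_{1}=1$. Then $a_{n}=b_{n}=1$ for all $n$, i.e.\ both symbols are inessential, yet each equation is a single product of two factors, so your $E_0$ would contain both and your algorithm would wrongly return $\mathcal{E}(F)=\{a,b\}$. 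The correct seed is only the first clause you state: coefficient sum exceeding $1$, which by Remark \ref{Rmk: 1} is exactly the condition $a_{1}\geq 2$ used in the paper; this also dissolves your worry about deciding ``$a_{m}\geq 2$ for some $m$,'' since no test beyond $m=1$ is needed. Separately, your completeness claim (``any essential symbol \dots\ is eventually captured'') needs one more line: you must rule out a cycle of essential symbols none of which is directly essential. This follows by induction on $n$: if every symbol reachable from $a$ under $\rightarrow$ has a single-term equation and initial value $1$, then all of these values remain $1$ for all $n$, so $a$ is inessential --- which is the contrapositive argument the paper carries out. With the seed corrected and that induction spelled out, your proof coincides with the paper's.
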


\begin{proof}
Without loss of generality, we assume $r=1$. Let $\mathcal{S}_{1}\subseteq
\Gamma $ be the set of $a\in \mathcal{S}_{1}$ with $a_{1}\geq 2$. We define $%
\mathcal{S}_{k}$ by induction as for any $a\in \mathcal{S}_{k}$ there exists 
$b\in \mathcal{S}_{k-1}$ such that $a\rightarrow b$. Since $\Gamma $ is
finite ($\left\vert \Gamma \right\vert =k$), this algorithm will stop up to $%
k$-steps. Thus we have $\mathcal{S}=\mathcal{S}_{1}\cup \mathcal{S}_{2}\cup
\cdots \cup \mathcal{S}_{q}\subseteq \Gamma $. It follows from Lemma \ref%
{Lma: 3} that $\mathcal{S}\subseteq \mathcal{E}(F).$ Let $\mathcal{R}=\Gamma
\backslash \mathcal{S}$, we claim that $\mathcal{R}=\mathcal{I}(F)$. If $%
a\in \mathcal{R}$, this means that $a\notin \mathcal{S}_{k}$ for $k=1,\ldots
,q$, thus $a_{1}=1$. Suppose there is an integer $m\in \mathbb{N}$ such that 
$a_{m}\geq 2$. Since $r=1$, this means that $a$ must induce some $b\in 
\mathcal{S}$ or $a_{m}$ has at least two items. If $a_{m}$ has at least two
terms, then $a_{1}\geq 2$, which is a contradiction. If $a$ induces some $%
b\in \mathcal{S}$, then $a\in \mathcal{S}$, which is also a contradiction.
Thus $a_{n}=1$ for all $n\in \mathbb{N}$. That is, $a\in \mathcal{I}(F)$. On
the other hand, if $a\in \mathcal{I}(F)$, it follows from Lemma \ref{Lma: 3}
that $a$ cannot induce any $b\in \mathcal{E}(F)$. This means that $a\notin 
\mathcal{S}$, i.e., $a\in \mathcal{R}$. Thus $\mathcal{R}=\mathcal{I}(F)$.
This completes the proof.
\end{proof}

\section{Topological entropy}

In this section, we introduce four types of SNREs that we can compute their
entropies explicitly. For simplicity of discussion and notation, we assume $%
(d,k)=(2,2)$ throughout this section. In this circumstance, we define $a_{n}=\gamma
_{1;n}$ and $b_{n}=\gamma _{2;n}$ and use $F=\{F^{(a)},F^{(b)}\}$ to denote
the associated SNRE. For example, in Example \ref{Ex: 2} 
\begin{equation*}
\left\{ 
\begin{array}{c}
a_{n}=F^{(a)}=a_{n-1}^{2}+2a_{n-1}b_{n-1}+b_{n-1}^{2}, \\ 
b_{n}=F^{(b)}=a_{n-1}^{2}.%
\end{array}%
\right.
\end{equation*}

\begin{remark}
\label{Rmk: 1}The notation $F=\{F^{(a)},F^{(b)}\}$ is adequate to describe
the SNRE. Let $\left\vert F^{(\ast )}\right\vert $ be the number of items of 
$F^{(\ast )}$ for $\ast =a,b$. In the Definition \ref{Def: 1} of SNRE, we
still need the initial conditions, i.e., $a_{1}$ and $b_{1}$, to define an
SNRE. However, we see that $\left\vert F^{(a)}\right\vert =a_{1}$ and $%
\left\vert F^{(b)}\right\vert =b_{1}$ in (\ref{20}). Thus, $%
F=\{F^{(a)},F^{(b)}\}$ provides all information of an SNRE.
\end{remark}

\subsection{Equal growth type}

We say an SNRE is of the \emph{equal growth type}, write type $\mathbf{E}$,
if $a_{n}=b_{n}$ for all $n\geq 1$.

\begin{proposition}
\label{Prop: 1}Let $F$ be an SNRE. If $a_{1}=b_{1}$, then $a_{n}=b_{n}$ for
all $n\in \mathbb{N}$.
\end{proposition}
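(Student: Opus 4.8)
The plan is to proceed by induction on $n$, exploiting the homogeneity of the right-hand sides of the SNRE together with the bookkeeping identity recorded in Remark \ref{Rmk: 1}. Since $(d,k)=(2,2)$, both $F^{(a)}$ and $F^{(b)}$ are $\mathbb{Z}^{+}$-combinations of the degree-$2$ monomials in $\Delta_{n-1}^{2}=\{a_{n-1}^{2},a_{n-1}b_{n-1},b_{n-1}^{2}\}$; I would write $F^{(a)}=\alpha_{1}a_{n-1}^{2}+\alpha_{2}a_{n-1}b_{n-1}+\alpha_{3}b_{n-1}^{2}$ and $F^{(b)}=\beta_{1}a_{n-1}^{2}+\beta_{2}a_{n-1}b_{n-1}+\beta_{3}b_{n-1}^{2}$ with nonnegative integer coefficients (a monomial is simply absent when its coefficient is $0$). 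The base case $n=1$ is exactly the hypothesis $a_{1}=b_{1}$.

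For the inductive step, suppose $a_{n-1}=b_{n-1}=:t$. The key observation is that every monomial $a_{n-1}^{i_{1}}b_{n-1}^{i_{2}}$ appearing in $F^{(a)}$ or $F^{(b)}$ has total degree $i_{1}+i_{2}=2$, so it collapses to $t^{2}$ on the diagonal. Consequently $a_{n}=F^{(a)}(t,t)=(\alpha_{1}+\alpha_{2}+\alpha_{3})\,t^{2}=|F^{(a)}|\,t^{2}$ and likewise $b_{n}=|F^{(b)}|\,t^{2}$. By Remark \ref{Rmk: 1} we have $|F^{(a)}|=a_{1}$ and $|F^{(b)}|=b_{1}$, so the hypothesis $a_{1}=b_{1}$ forces $a_{n}=b_{n}$. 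This closes the induction and proves the proposition.

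The point to emphasize is that no obstacle of substance arises: the only thing that could fail is an asymmetry between $F^{(a)}$ and $F^{(b)}$, but equality is needed only along the diagonal $a_{n-1}=b_{n-1}$, where every degree-$2$ form reduces to a multiple of $t^{2}$ whose multiplier is its total term count. Thus the argument never requires $F^{(a)}$ and $F^{(b)}$ to coincide as polynomials, only that they carry the same number of terms, which is precisely the content of $a_{1}=b_{1}$ via Remark \ref{Rmk: 1}. I would also note in passing that the same computation goes through verbatim for arbitrary $(d,k)$: along the diagonal each degree-$d$ monomial collapses to $t^{d}$, so equal initial values $\gamma_{1;1}=\cdots=\gamma_{k;1}$ propagate to $\gamma_{1;n}=\cdots=\gamma_{k;n}$ for every $n$.
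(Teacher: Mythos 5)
Your proof is correct and follows essentially the same route as the paper's: induction on $n$, using the fact (Remark \ref{Rmk: 1}) that $|F^{(a)}|=a_{1}$ and $|F^{(b)}|=b_{1}$ together with the homogeneity of degree $d$, so that on the diagonal $a_{n-1}=b_{n-1}=t$ every item collapses to $t^{d}$ and only the term counts matter. Your closing remark that the argument extends verbatim to arbitrary $(d,k)$ is a nice observation but does not change the substance.
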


\begin{proof}
Assume $a_{n-1}=b_{n-1}$. The fact of $a_{1}=b_{1}$ indicates that $F^{(a)}$
and $F^{(b)}$ have the same number of items (see Remark \ref{Rmk: 1}), and
each item is a combination of $a_{n-1}$ and $b_{n-1}$ with degree $d$. Since 
$a_{n-1}=b_{n-1}$, one can compares each item of $F^{(a)}$ with $F^{(b)}$ to
conclude that $a_{n}=b_{n}$. The proof is thus completed by mathematical
induction.
\end{proof}

\begin{proposition}
\label{Prop: 2}Let $F$ be an SNRE with $a_{1}=b_{1}$, then $h(X^{\mathcal{B}%
})=\frac{1}{2}\ln a_{1}$.
\end{proposition}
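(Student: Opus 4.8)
The plan is to reduce the two-sequence SNRE to a single scalar recursion, solve it in closed form, and then evaluate the entropy formula directly. First I would invoke Proposition~\ref{Prop: 1}: since $a_1 = b_1$, we have $a_n = b_n$ for every $n \in \mathbb{N}$, so that $\left\vert B_n(X^{\mathcal{B}})\right\vert = a_n + b_n = 2a_n$. This collapses the system $F = \{F^{(a)}, F^{(b)}\}$ to a recursion in the single sequence $(a_n)$ and is the structural observation that drives the whole argument.

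Next I would derive the explicit recursion for $a_n$. By Definition~\ref{Def: 1}, each term of $F^{(a)}$ is a degree-$2$ monomial in $a_{n-1}$ and $b_{n-1}$; when $a_{n-1} = b_{n-1}$ every such monomial $a_{n-1}^{i} b_{n-1}^{2-i}$ equals $a_{n-1}^{2}$. By Remark~\ref{Rmk: 1} the number of terms of $F^{(a)}$ is exactly $a_1$, so summing these identical monomials gives $a_n = a_1\, a_{n-1}^{2}$ for $n \ge 2$. Setting $L_n = \ln a_n$ linearizes this to $L_n = 2 L_{n-1} + \ln a_1$ with $L_1 = \ln a_1$, whose solution is $L_n = (2^n - 1)\ln a_1$; equivalently $a_n = a_1^{\,2^n - 1}$, which one can also confirm by a one-line induction.

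Finally I would substitute into the entropy formula \eqref{17}. For $G = S_2$ one has $\left\vert E_n\right\vert = \sum_{k=0}^{n} 2^k = 2^{n+1} - 1$, while $\ln \left\vert B_n(X^{\mathcal{B}})\right\vert = \ln 2 + (2^n - 1)\ln a_1$. Therefore
\begin{equation*}
h(X^{\mathcal{B}}) = \lim_{n\to\infty} \frac{\ln 2 + (2^n - 1)\ln a_1}{2^{n+1} - 1} = \frac{1}{2}\ln a_1,
\end{equation*}
since the dominant contributions are $2^n \ln a_1$ in the numerator and $2\cdot 2^n$ in the denominator, and all lower-order terms vanish in the ratio. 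The explicit closed form also shows the $\limsup$ is a genuine limit here, consistent with the Petersen--Salama result cited after \eqref{17}.

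The argument is short once the reduction is in place. The only step requiring genuine care is the counting step: verifying that the collapse $a_{n-1}^{i} b_{n-1}^{2-i} = a_{n-1}^{2}$ combined with the identity $\left\vert F^{(a)}\right\vert = a_1$ from Remark~\ref{Rmk: 1} produces precisely the leading coefficient $a_1$ in $a_n = a_1\, a_{n-1}^{2}$ (including the multiplicities $f_{\mathbf{a}}$, which must sum to $\left\vert F^{(a)}\right\vert$). Everything after that is a routine linear recursion in $\ln a_n$ followed by a limit of a ratio of geometric sequences.
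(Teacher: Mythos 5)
Your proposal is correct and follows essentially the same route as the paper: reduce to $a_n = a_1\,a_{n-1}^{2}$ via Proposition~\ref{Prop: 1} and Remark~\ref{Rmk: 1}, linearize with $\ln a_n$ to get $(2^n-1)\ln a_1$, and divide by $\left\vert E_n\right\vert = 2^{n+1}-1$. You are somewhat more explicit than the paper about why the coefficient in $a_n = a_1\,a_{n-1}^2$ is exactly $a_1$ (the collapse of the degree-$2$ monomials plus the identity $\left\vert F^{(a)}\right\vert = a_1$), which is a worthwhile clarification but not a different argument.
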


\begin{proof}
Since $a_{n}=b_{n}$ and $\left\vert E_{n}\right\vert =2^{n+1}-1$. According
to (\ref{17}), we have 
\begin{equation*}
h(X^{\mathcal{B}})=\lim_{n\rightarrow \infty }\frac{1}{2^{n+1}-1}\ln
(a_{n}+b_{n})=\lim_{n\rightarrow \infty }\frac{1}{2^{n+1}-1}\ln a_{n}.
\end{equation*}%
It follows from Proposition \ref{Prop: 1} that we rewrite $a_{n}$ as $%
a_{n}=a_{1}a_{n-1}^{2}$. Let $\alpha _{n}=\ln a_{n}$, we have $\alpha
_{n}=2\alpha _{n-1}+\alpha _{1}$. Iterating $\alpha _{n}$ yields $\alpha
_{n}=2^{n-1}\alpha _{1}\sum_{i=0}^{n-1}2^{-i}=2^{n}\alpha _{1}(1-\left( 
\frac{1}{2}\right) ^{n})$. Thus 
\begin{equation*}
h(X^{\mathcal{B}})=\lim_{n\rightarrow \infty }\frac{2^{n}\alpha
_{1}(1-\left( \frac{1}{2}\right) ^{n})}{2^{n+1}-1}=\frac{1}{2}\alpha _{1}=%
\frac{1}{2}\ln a_{1}.
\end{equation*}%
This completes the proof.
\end{proof}

\subsection{Dominating type}

Let $F$ be an SNRE, $a\in \mathcal{A}$ is called a \emph{dominate symbol }if 
$a_{n}\geq b_{n}$ for $n\in \mathbb{N}$. If $F$ admits a dominate symbol, we
call $F$ is of the \emph{dominating} $\emph{type}$ (type $\mathbf{D}$).

\begin{example}
Suppose $F$ is defined as 
\begin{equation*}
\left\{ 
\begin{array}{c}
a_{n}=a_{n-1}^{2}+2a_{n-1}b_{n-1} \\ 
b_{n}=a_{n-1}^{2}+a_{n-1}b_{n-1} \\ 
a_{1}=3\text{, }b_{1}=2.%
\end{array}%
\right.
\end{equation*}%
Since $a_{1}\geq b_{1}$ and if we assume $a_{n-1}\geq b_{n-1}$, we have $%
a_{n}\geq b_{n}$ according to $F$. Thus $a$ is a dominate symbol and $F\in 
\mathbf{D}$.
\end{example}

\begin{lemma}
\label{Lma: 2}Let $F$ be an SNRE. If $a\in \mathcal{A}$ is a dominate
symbol, then $h(X^{\mathcal{B}})=\lim_{n\rightarrow \infty }\frac{\ln a_{n}}{%
2^{n+1}-1}$.
\end{lemma}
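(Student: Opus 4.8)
The plan is to show that, under the dominating hypothesis, the total block count $|B_n(X^{\mathcal B})| = a_n + b_n$ is controlled on both sides by $a_n$ alone, so that the $\ln$ in the entropy formula \eqref{17} effectively sees only $a_n$. Concretely, since $a$ is a dominate symbol we have $a_n \le a_n + b_n \le 2a_n$ for every $n \in \mathbb N$, and since $\left\vert E_n\right\vert = 2^{n+1}-1$ the definition \eqref{17} gives
\begin{equation*}
\frac{\ln a_n}{2^{n+1}-1} \;\le\; \frac{\ln(a_n+b_n)}{2^{n+1}-1} \;\le\; \frac{\ln a_n + \ln 2}{2^{n+1}-1}.
\end{equation*}

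Next I would send $n \to \infty$ and squeeze. The outer two quantities differ only by the term $\tfrac{\ln 2}{2^{n+1}-1}$, which tends to $0$ because the denominator grows geometrically. Hence both bounds have the same limit (provided that limit exists), and the middle quantity, which is exactly $h(X^{\mathcal B})$ by \eqref{17} together with $|B_n(X^{\mathcal B})| = a_n + b_n$, is forced to equal $\lim_{n\to\infty}\frac{\ln a_n}{2^{n+1}-1}$. This yields the claimed identity
\begin{equation*}
h(X^{\mathcal B}) = \lim_{n\rightarrow \infty} \frac{\ln a_n}{2^{n+1}-1}.
\end{equation*}

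The one genuinely delicate point is the existence of the limit on the right-hand side, since a priori \eqref{17} is stated as a $\limsup$ and I have only produced a two-sided bound. I expect this to be the main obstacle, but it is readily handled: the existence of $h(X^{\mathcal B})$ as an honest limit is guaranteed by the cited Petersen--Salama result (Theorem 5.1 of \cite{PS-2017complexity}), and the squeeze above shows that $\frac{\ln a_n}{2^{n+1}-1}$ and $\frac{\ln(a_n+b_n)}{2^{n+1}-1}$ are asymptotically equal term by term; since the latter converges, so does the former, to the same value. Thus the argument reduces entirely to the elementary inequality $a_n \le a_n + b_n \le 2a_n$ supplied by the dominating hypothesis, plus the geometric decay of $1/(2^{n+1}-1)$, and no further structural information about the SNRE $F$ is needed at this stage.
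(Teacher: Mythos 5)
Your proposal is correct and is essentially the paper's own argument: the paper factors $a_n+b_n=a_n\bigl(1+\tfrac{b_n}{a_n}\bigr)$ with $1<1+\tfrac{b_n}{a_n}\le 2$, which is exactly your two-sided bound $a_n\le a_n+b_n\le 2a_n$ followed by the observation that the $\ln 2$ discrepancy is killed by the denominator $2^{n+1}-1$. Your extra care about the $\limsup$ versus limit issue (via the Petersen--Salama existence result) is a detail the paper leaves implicit, but it does not change the route.
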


\begin{proof}
If $a$ is a dominate symbol, then $1<1+\frac{b_{n}}{a_{n}}\leq 2$. Then $%
h(X^{\mathcal{B}})=\lim_{n\rightarrow \infty }\frac{\ln a_{n}(1+\frac{b_{n}}{%
a_{n}})}{2^{n+1}-1}=\lim_{n\rightarrow \infty }\frac{\ln a_{n}}{2^{n+1}-1}$.
This completes the proof.
\end{proof}

Let $\mathcal{M}_{m}$ be the collection of $m\times m$ square matrices.
Suppose $F=\{F^{(a)},F^{(b)}\}$ admits a dominate symbol, say $a$, we could
arrange items of $F^{(a)}$ and $F^{(b)}$ with respect to the power of $%
a_{n-1}$ descendingly. Denote by $F_{1}^{(\ast )}$ be the first item of $%
F^{(\ast )}$ for $\ast =a$ or $b$. For $\ast =$ $a$ or $b$, we have 
\begin{equation}
F^{(\ast )}=c_{1}F_{1}^{(\ast )}+c_{2}F_{2}^{(\ast )}\cdots +c_{k_{\ast
}}F_{k_{\ast }}^{(\ast )}=F_{1}^{(\ast )}(c_{1}+\cdots +\frac{c_{k_{\ast
}}F_{k_{\ast }}^{(\ast )}}{F_{1}^{(\ast )}})\text{.}  \label{22}
\end{equation}%
Let 
\begin{equation}
r_{n-1}^{(\ast )}=(c_{1}+\cdots +\frac{c_{k_{\ast }}F_{k_{\ast }}^{(\ast )}}{%
F_{1}^{(\ast )}}).  \label{21}
\end{equation}%
Here we use the notation $r_{n-1}^{(\ast )}$ since the RHS of (\ref{21}) is
a combination of $\{a_{n-1},b_{n-1}\}$. Since $F_{1}^{(\ast )}\geq
F_{j}^{(\ast )}$ for all $j=2,\ldots ,k_{\ast }$, we have 
\begin{equation}
1\leq r^{(\ast )}\leq c_{1}+\cdots +c_{k_{\ast }}\leq 4\text{,}  \label{23}
\end{equation}%
where the number $4$ comes from the extreme case of $c_{1}=1$ and $k_{\ast
}=4$. Let $\alpha _{n}=\ln a_{n}$ and $\beta _{n}=\ln b_{n}$ and $%
v_{n}=\left( 
\begin{array}{c}
\alpha _{n} \\ 
\beta _{n}%
\end{array}%
\right) $. Combining these with (\ref{22}), we deduce 
\begin{equation}
v_{n}=Kv_{n-1}+\ln r_{n-1}\text{, where }\ln r_{n-1}:=\left( 
\begin{array}{c}
\ln r_{n-1}^{(a)} \\ 
\ln r_{n-1}^{(b)}%
\end{array}%
\right) \text{,}  \label{3}
\end{equation}%
and $K\in \mathcal{M}_{2}$. Denote by $v^{[i]}$ the $i$-th element of a
vector $v$, and $R\in \mathcal{M}_{2}$ is defined by $\left( 
\begin{array}{cc}
1 & 1 \\ 
1 & 1%
\end{array}%
\right) =R\left( 
\begin{array}{cc}
2 & 0 \\ 
0 & 0%
\end{array}%
\right) R^{-1}$. Note that $\left( \ln r_{n}\right) ^{[1]}=\ln r_{n-1}^{(a)}$
and $\left( \ln r_{n}\right) ^{[2]}=\ln r_{n-1}^{(b)}$.

\begin{proposition}
\label{Prop: 3}Let $F$ be an SNRE which admits a dominate symbol $a\in 
\mathcal{A}\cap \mathcal{E}(F)$, then $F_{1}^{(a)}\neq b_{n-1}^{2}$.
Furthermore, we have
\end{proposition}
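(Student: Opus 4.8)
The plan is to argue by contradiction: suppose $F_1^{(a)} = b_{n-1}^2$. Since the items of $F^{(a)}$ are arranged by descending power of $a_{n-1}$, and among the three possible degree-$2$ monomials $a_{n-1}^2$, $a_{n-1}b_{n-1}$, $b_{n-1}^2$ the one with minimal $a_{n-1}$-power (namely $0$) is exactly $b_{n-1}^2$, the assumption that $b_{n-1}^2$ is the \emph{first} item forces every item of $F^{(a)}$ to equal $b_{n-1}^2$; otherwise a monomial carrying a positive power of $a_{n-1}$ would precede it. In the $(d,k)=(2,2)$ setting the monomial $b_{n-1}^2$ can only come from the single admissible $2$-block $(1,2,2)$, so its coefficient is at most $1$, whence $F^{(a)} = b_{n-1}^2$. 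By Remark \ref{Rmk: 1} this gives $a_1 = \left\vert F^{(a)}\right\vert = 1$.

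Next I would feed this into the dominate hypothesis. Since $a$ is a dominate symbol, $a_1 \geq b_1$, and as $b_1 \geq 1$ this forces $b_1 = 1$, so $a_1 = b_1 = 1$. I would then run a coupled induction to show $a_n = b_n = 1$ for every $n$: assuming $a_{n-1} = b_{n-1} = 1$, the relation $a_n = b_{n-1}^2$ gives $a_n = 1$, while evaluating $F^{(b)}$ at $(1,1)$ collapses every monomial to $1$, so $b_n = \left\vert F^{(b)}\right\vert = b_1 = 1$ (again by Remark \ref{Rmk: 1}). Hence $a_n = 1$ for all $n$, which says precisely that $a \in \mathcal{I}(F)$, contradicting the hypothesis $a \in \mathcal{E}(F)$. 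This contradiction establishes $F_1^{(a)} \neq b_{n-1}^2$.

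The step I expect to require the most care is the first one: pinning down that $F_1^{(a)} = b_{n-1}^2$ really forces $F^{(a)}$ to be the lone monomial $b_{n-1}^2$ with coefficient exactly $1$. This rests on two facts specific to $(d,k)=(2,2)$ — that the $a_{n-1}$-power strictly orders the three candidate monomials, and that the block $(1,2,2)$ producing $b_{n-1}^2$ occurs at most once in $\mathcal{B}^{(1)}$ so its coefficient cannot exceed $1$. Once $a_1 = 1$ is secured the rest is routine, the only point to double-check being the coupling of the two recursions: because the vanishing $a_{n-1}$-power makes $a_n$ driven entirely by $b_{n-1}$, one must carry $a_n = 1$ and $b_n = 1$ through the induction simultaneously rather than separately. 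For the (truncated) \emph{furthermore} clause I would anticipate a normal-form conclusion recording that the leading item of $F^{(a)}$ must instead carry a positive power of $a_{n-1}$, i.e. $F_1^{(a)} \in \{a_{n-1}^2, a_{n-1}b_{n-1}\}$, which follows immediately from the same case analysis on the $a_{n-1}$-power of the leading monomial.
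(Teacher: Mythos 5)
Your argument is correct and reaches the paper's conclusion by essentially the same route: assuming $F_{1}^{(a)}=b_{n-1}^{2}$ forces $a$ to be inessential, contradicting $a\in \mathcal{E}(F)$, and the surviving cases are $F_{1}^{(a)}\in \{a_{n-1}^{2},a_{n-1}b_{n-1}\}$ exactly as you anticipate. The only organizational difference is that the paper first invokes domination to conclude $F_{1}^{(b)}=b_{n-1}^{2}$ and hence $b\in \mathcal{I}(F)$ before deducing $a_{n}=b_{n-1}^{2}=1$, whereas you use the coefficient bound to get $a_{1}=1$, use domination only to force $b_{1}=1$, and then run a coupled induction --- arguably a cleaner deployment of the hypothesis, since the paper's inference from $a_{n}\geq b_{n}$ to the form of $F_{1}^{(b)}$ is left implicit.
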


\begin{enumerate}
\item if $F_{1}^{(a)}=a_{n-1}^{2}$, then 
\begin{equation*}
h(X^{\mathcal{B}})=\frac{1}{4}\lim_{n\rightarrow \infty }\left( \ln
a_{1}+\sum_{j=1}^{n-1}2^{-j}\ln r_{j}^{(a)}\right) ;
\end{equation*}

\item if $F_{1}^{(a)}=a_{n-1}b_{n-1}$ and $F_{1}^{(b)}=a_{n-1}b_{n-1}$, then 
\begin{equation*}
h(X^{\mathcal{B}})=\frac{1}{4}\lim_{n\rightarrow \infty }\left( (\widehat{v}%
_{1})^{[1]}+\sum_{j=1}^{n-1}2^{-j}\left( \ln \widehat{r}_{j}\right)
^{[1]}\right) ,
\end{equation*}
where $\widehat{v}_{1}=R^{-1}v_{1}$ and $\ln \widehat{r}_{j}=R^{-1}\ln r_{j}$%
;

\item if $F_{1}^{(a)}=a_{n-1}b_{n-1}$ and $F_{1}^{(b)}=b_{n-1}^{2}$, then
the number $\left\vert B_{n}(X^{\mathcal{B}})\right\vert $ is up to
exponential. In this case, we have $h(X^{\mathcal{B}})=0$.
\end{enumerate}

\begin{proof}
Since $a$ is a dominate symbol, i.e., $a_{n}\geq b_{n}$, the equality $%
F_{1}^{(a)}=b_{n-1}^{2}$ implies that $F_{1}^{(b)}=b_{n-1}^{2}$ and so $%
b_{n}=1$ for all $n$, i.e., $b\in \mathcal{I}(F)$. Consequently, $%
F_{1}^{(a)}=b_{n-1}^{2}=1$; that is, $a\in \mathcal{I}(F)$, which is a
contradiction. Hence $F_{1}^{(a)}\neq b_{n-1}^{2}$. Thus, there are two
possibilities for $F_{1}^{(a)}$; namely, $F_{1}^{(a)}=a_{n-1}^{2}$ or $%
F_{1}^{(a)}=a_{n-1}b_{n-1}$.

\textbf{(i)} $F_{1}^{(a)}=F_{1}^{(b)}=a_{n-1}^{2}$. It follows the preceding
algorithm we have $K=\left( 
\begin{array}{cc}
2 & 0 \\ 
2 & 0%
\end{array}%
\right) $. Iterates the recursive formula (\ref{3}) we obtain $%
v_{n}=K^{n-1}v_{1}+\sum_{j=1}^{n-1}K^{n-j-1}\ln r_{j}$. Combining this with
the fact that $K^{n}=\left( 
\begin{array}{cc}
2^{n} & 0 \\ 
2^{n} & 0%
\end{array}%
\right) $ we have 
\begin{eqnarray}
\alpha _{n} &=&\ln a_{n}=\left( K^{n-1}v_{1}\right)
^{[1]}+\sum_{j=1}^{n-1}\left( K^{n-j-1}\ln r_{j}\right) ^{[1]}  \notag \\
&=&2^{n-1}\ln a_{1}+\sum_{j=1}^{n-1}2^{n-j-1}\ln r_{j}^{(a)}  \notag \\
&=&2^{n-1}\left( \ln a_{1}+\sum_{j=1}^{n-1}2^{-j}\ln r_{j}^{(a)}\right) 
\text{.}  \label{16}
\end{eqnarray}%
Let $A_{n}=\ln a_{1}+\sum_{j=1}^{n-1}2^{-j}\ln r_{j}^{(a)}$. Since $1\leq
r_{j}^{(a)}\leq 4$ for all $j$ (see (\ref{23})), $A_{n}$ converges and
denotes $A_{\infty }:=\lim_{n\rightarrow \infty }A_{n}$. It follows from
Lemma \ref{Lma: 2} that%
\begin{equation*}
h(X^{\mathcal{B}})=\lim_{n\rightarrow \infty }\frac{\alpha _{n}}{2^{n+1}-1}=%
\frac{1}{4}\left( \ln a_{1}+\sum_{j=1}^{\infty }2^{-j}\ln r_{j}^{(a)}\right)
=\frac{A_{\infty }}{4}\text{.}
\end{equation*}

\textbf{(ii)} $F_{1}^{(a)}=a_{n-1}^{2}$, $F_{1}^{(b)}=a_{n-1}b_{n-1}$ or $%
F_{1}^{(b)}=b_{n-1}^{2}$. If $F_{1}^{(b)}=a_{n-1}b_{n-1}$, we see that $%
K=\left( 
\begin{array}{cc}
2 & 0 \\ 
1 & 1%
\end{array}%
\right) $. Since $K^{n}=\left( 
\begin{array}{cc}
2^{n} & 0 \\ 
2^{n}-1 & 1%
\end{array}%
\right) $, identical argument is applied to show that $h(X^{\mathcal{B}})=%
\frac{A_{\infty }}{4}$. Similarly, in the case where $%
F_{1}^{(b)}=b_{n-1}^{2} $, we have $h(X^{\mathcal{B}})=\frac{A_{\infty }}{4}$%
.

\textbf{(iii) }$F_{1}^{(a)}=a_{n-1}b_{n-1}$ and $F_{1}^{(b)}=a_{n-1}b_{n-1}$%
. We see that $K=\left( 
\begin{array}{cc}
1 & 1 \\ 
1 & 1%
\end{array}%
\right) $. Let $R\in \mathcal{M}_{2}$ be defined as above. By a similar
argument, we have $v_{n}=R\left(
D^{n-1}R^{-1}v_{1}+\sum_{j=1}^{n-1}D^{n-j-1}R^{-1}\ln r_{j}\right) $. Let $%
R^{-1}v_{1}=\widehat{v}_{1}$ and $R^{-1}\ln r_{j}=\ln \widehat{r}_{j}$. Then
we have $v_{n}=R(D^{n-1}\widehat{v}_{1}+\sum_{j=1}^{n-1}D^{n-j-1}\ln 
\widehat{r}_{j})$ and 
\begin{eqnarray*}
\ln a_{n} &=&v_{n}^{[1]}=R_{11}\left( 2^{n-1}\widehat{v}_{1}{}^{[1]}+%
\sum_{j=1}^{n-1}2^{n-j-1}\left( \ln \widehat{r}_{j}\right) ^{[1]}\right) \\
&=&2^{n-1}\left[ R_{11}\left( \widehat{v}_{1}{}^{[1]}+\sum_{j=1}^{n-1}2^{-j}%
\left( \ln \widehat{r}_{j}\right) ^{[1]}\right) \right] \text{.}
\end{eqnarray*}%
Since $R_{11}=1$, $h(X^{\mathcal{B}})=\frac{1}{4}\lim_{n\rightarrow \infty
}\left( \widehat{v}_{1}{}^{[1]}+\sum_{j=1}^{n-1}2^{-j}\left( \ln \widehat{r}%
_{j}\right) ^{[1]}\right) $.

\textbf{(iv)} $F_{1}^{(a)}=a_{n-1}b_{n-1}$ and $F_{1}^{(b)}=b_{n-1}^{2}$. In
this case one can check that $b\in \mathcal{I}(F)$, $K=\left( 
\begin{array}{cc}
1 & 1 \\ 
0 & 2%
\end{array}%
\right) $ and $K^{n}=\left( 
\begin{array}{cc}
1 & 2^{n}-1 \\ 
0 & 2^{n}%
\end{array}%
\right) $. Since $a\in \mathcal{E}(F)$ and $\left( \ln r_{j}\right) ^{[2]}=0$
($b_{n}=1$), we have 
\begin{eqnarray*}
\alpha _{n} &=&v_{n}^{[1]}=\left( K^{n-1}v_{1}\right)
^{[1]}+\sum_{j=1}^{n-1}\left( K^{n-j-1}\ln r_{j}\right) ^{[1]} \\
&=&\ln a_{1}+\sum_{j=1}^{n-1}\left( \ln r_{j}\right) ^{[1]}=\ln
a_{1}+\sum_{j=1}^{n-1}\ln r_{n}^{(a)}\leq cn
\end{eqnarray*}%
for some constant $c$. Thus, the growth rate of $a_{n}$ is up to
exponential. Meanwhile, $h(X^{\mathcal{B}})=0$. This completes the proof.
\end{proof}

\begin{example}
Let $F=\{F^{(a)},F^{(b)}\}$ is defined as 
\begin{equation*}
\left\{ 
\begin{array}{c}
a_{n}=2a_{n-1}b_{n-1}, \\ 
b_{n}=b_{n-1}^{2}, \\ 
a_{1}=2,b_{1}=1.%
\end{array}%
\right.
\end{equation*}%
We have $v_{n}=Kv_{n-1}+\ln r_{n-1}$, where $K=\left( 
\begin{array}{cc}
1 & 1 \\ 
0 & 2%
\end{array}%
\right) $ and $\ln r_{n}=\left( 
\begin{array}{c}
\ln 2 \\ 
0%
\end{array}%
\right) $ for $n\in \mathbb{N}$. Thus $v_{n}=K^{n-1}v_{1}+K^{n-2}\ln
r_{1}+\cdots +\ln r_{n-1}$. Since $K^{n}=\left( 
\begin{array}{cc}
1 & 2^{n}-1 \\ 
0 & 2^{n}%
\end{array}%
\right) $, we have $\alpha _{n}=\alpha _{1}+\ln 2+\cdots +\ln 2=n\ln 2$.
Then, $(a_{n},b_{n})=(2^{n},1)$ for $n\geq 1$ due to the fact that $b\in 
\mathcal{I}(F)$.
\end{example}

\subsection{Cooperating type}

Let $F$ be an SNRE, and define $c_{n}=a_{n}+b_{n}$. We say that $F$ is of
the \emph{cooperating type }(type $\mathbf{C}$) if $%
c_{n}=c_{n-1}^{2}+g_{n-1} $, where $g_{n}\leq c_{n}^{2}$. For example, if $F$
is 
\begin{equation}
\left\{ 
\begin{array}{c}
a_{n}=2a_{n-1}b_{n-1}+b_{n-1}^{2}, \\ 
b_{n}=a_{n-1}^{2}+a_{n-1}b_{n-1}, \\ 
a_{1}=3,b_{1}=2.%
\end{array}%
\right.  \label{13}
\end{equation}%
Then we have $(a_{n}+b_{n})=(a_{n-1}+b_{n-1})^{2}+a_{n-1}b_{n-1}$ and $%
a_{n-1}b_{n-1}\leq (a_{n-1}+b_{n-1})^{2}$.

\begin{proposition}
\label{Prop: 4}Let $X^{\mathcal{B}}$ be an $S_{d}$-SFT and $F$ be its SNRE.
If $F$ is of the cooperating type, then 
\begin{equation*}
h(X^{\mathcal{B}})=\frac{1}{4}\left[ \ln c_{1}+\lim_{n\rightarrow \infty
}\sum_{j=1}^{n-1}2^{n-j-1}\ln (1+\frac{g_{j}}{c_{j}^{2}})\right] \text{.}
\end{equation*}
\end{proposition}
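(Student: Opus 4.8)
The plan is to reduce everything to the single sequence $c_n=a_n+b_n$ and then linearize by taking logarithms. First I would record that, by Theorem~\ref{Thm: 4}, the total number of $n$-blocks splits according to the root symbol, so $|B_n(X^{\mathcal{B}})|=\gamma_{1;n}+\gamma_{2;n}=a_n+b_n=c_n$, while $|E_n|=2^{n+1}-1$ is the number of vertices of the complete binary tree of height $n$. Hence \eqref{17} reads
\[
h(X^{\mathcal{B}})=\lim_{n\to\infty}\frac{\ln c_n}{2^{n+1}-1},
\]
and the whole task is to understand the growth of $\ln c_n$.

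Next, since $F$ is of the cooperating type we have $c_n=c_{n-1}^2+g_{n-1}=c_{n-1}^2\bigl(1+\tfrac{g_{n-1}}{c_{n-1}^2}\bigr)$. Setting $\gamma_n=\ln c_n$ and $\delta_n=\ln\bigl(1+\tfrac{g_n}{c_n^2}\bigr)$, this turns into the affine recursion $\gamma_n=2\gamma_{n-1}+\delta_{n-1}$ for $n\ge 2$. Iterating it yields
\[
\gamma_n=2^{n-1}\gamma_1+\sum_{j=1}^{n-1}2^{\,n-1-j}\delta_j=2^{n-1}\Bigl(\ln c_1+\sum_{j=1}^{n-1}2^{-j}\delta_j\Bigr).
\]
This is precisely the scalar analogue of the computation in Proposition~\ref{Prop: 3}(i); the point of the cooperating hypothesis is exactly that it collapses the two-dimensional recursion for $(\alpha_n,\beta_n)$ into a single recursion for $\gamma_n$, so no diagonalization of $K$ is needed.

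Finally I would divide by $2^{n+1}-1$ and pass to the limit. Because $\frac{2^{n-1}}{2^{n+1}-1}\to\frac14$, the weight surviving on the $j$-th term is $2^{-j}/4$, and the conclusion is
\[
h(X^{\mathcal{B}})=\frac14\Bigl(\ln c_1+\sum_{j=1}^{\infty}2^{-j}\delta_j\Bigr)=\frac14\Bigl[\ln c_1+\lim_{n\to\infty}\sum_{j=1}^{n-1}2^{-j}\ln\bigl(1+\tfrac{g_j}{c_j^2}\bigr)\Bigr],
\]
which is the asserted formula. The one step that genuinely requires justification — and hence the only real obstacle — is the convergence of $\sum_j 2^{-j}\delta_j$ together with the legitimacy of replacing the finite partial sums inside the limit by the infinite series. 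Here the defining inequality $g_n\le c_n^2$ is decisive: since each $g_j$ arises as a sum of products of positive integers we have $0\le\frac{g_j}{c_j^2}\le 1$, so $0\le\delta_j\le\ln 2$, and the series is dominated termwise by the convergent geometric series $(\ln 2)\sum_j 2^{-j}$. This uniform bound makes the interchange of limit and summation routine and simultaneously guarantees that $h(X^{\mathcal{B}})$ is finite.
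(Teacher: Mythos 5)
Your proof is correct and is exactly the argument the paper intends: the paper itself omits the proof of Proposition~\ref{Prop: 4}, saying only that it is ``almost identical'' to that of Proposition~\ref{Prop: 3}, and your scalar reduction to $c_n=c_{n-1}^2\bigl(1+\tfrac{g_{n-1}}{c_{n-1}^2}\bigr)$ followed by iteration of $\gamma_n=2\gamma_{n-1}+\delta_{n-1}$ is precisely the Proposition~\ref{Prop: 3}(i) computation with the two-dimensional system collapsed to one dimension, and you handle the one genuine issue (convergence, via $0\le\delta_j\le\ln 2$ from $g_j\le c_j^2$) correctly. One caveat: your final formula carries the weight $2^{-j}$, whereas the statement as printed has $2^{n-j-1}$; yours is the correct version (with the printed weight the inner limit would diverge whenever some $g_j>0$), and the discrepancy is evidently a typo in the proposition --- the factor $2^{n-1}$ was factored out without rescaling the exponents, the same slip that appears in the display for $\theta_n$ in the example following the proposition --- so you should not describe your result as literally ``the asserted formula'' without flagging this.
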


\begin{proof}
The proof is almost identical to the proof of Proposition \ref{Prop: 3},
thus we omit it.
\end{proof}

\begin{example}
Let $F$ be defined in (\ref{13}). We have $c_{n}=c_{n-1}^{2}+g_{n-1},$ where 
$g_{n}=a_{n}b_{n}$. Since $\frac{g_{n}}{c_{n}^{2}}\leq 1$, we have 
\begin{equation*}
\theta _{n}=2^{n-1}(\theta _{1}+\sum_{j=1}^{n-1}2^{n-j-1}\ln (1+\frac{g_{j}}{%
c_{j}^{2}})),
\end{equation*}%
where $\theta _{n}=\ln c_{n}$. Then $h(X^{\mathcal{B}})=\frac{1}{4}A_{\infty
}$, where 
\begin{equation*}
A_{\infty }=\lim_{n\rightarrow \infty }(\theta
_{1}+\sum_{j=1}^{n-1}2^{n-j-1}\ln (1+\frac{g_{j}}{c_{j}^{2}}))\text{.}
\end{equation*}
\end{example}

\subsection{Oscillating type}

We say that $F$ is of the $\emph{oscillating}$ $\emph{type}$ (type $\mathbf{O%
}$) if there exist two subsequences of $\mathbb{N}$, say $\{n_{m}^{(a)}\}$
and $\{n_{m}^{(b)}\}$ such that $a_{n}\geq b_{n}$ on $\{n_{m}^{(a)}\}$ and $%
a_{n}<b_{n}$ on $\{n_{m}^{(b)}\}$.

\begin{proposition}
\label{Prop: 5}Let $\mathcal{B}=\{(1,1,2),(1,2,2),(2,1,1)\}$ be a basic set.
Suppose $X^{\mathcal{B}}$ and $F$ are the corresponding $S_{2}$-SFT and SNRE
respectively. Then $F$ is of the oscillating type, and 
\begin{equation*}
h(X^{\mathcal{B}})=\frac{1}{4}\lim_{n\rightarrow \infty }\left( \alpha
_{1}+\sum_{j=1}^{n}2^{-2j}\ln r_{2j-1}^{(a)}\right) \text{.}
\end{equation*}
\end{proposition}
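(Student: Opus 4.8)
The basic set gives $\mathcal{B}^{(1)}=\{(1,1,2),(1,2,2)\}$ and $\mathcal{B}^{(2)}=\{(2,1,1)\}$, so by Theorem \ref{Thm: 4} the associated SNRE is $a_n=a_{n-1}b_{n-1}+b_{n-1}^2$, $b_n=a_{n-1}^2$, with initial data $a_1=2$, $b_1=1$. My plan is to split the argument into two parts: first establish that the two symbols alternate in dominance, and then read off the entropy from the two-step recursion along the subsequence where $a$ dominates. The natural object is the ratio $t_n=a_n/b_n>0$, which satisfies $t_n=\psi(t_{n-1})$ for $\psi(t)=(t+1)/t^2$, since $t_n=\frac{b_{n-1}(a_{n-1}+b_{n-1})}{a_{n-1}^2}=\frac{1}{t_{n-1}}+\frac{1}{t_{n-1}^2}$.

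For the oscillation I would prove by induction on $m$ that $t_{2m-1}>\varphi$ and $0<t_{2m}<1$, where $\varphi=(1+\sqrt5)/2$. The engine is the elementary pair of facts $\psi(t)<1\iff t>\varphi$ (i.e. $t^2>t+1$) and $\psi(t)>\varphi$ for every $t\in(0,1)$ (i.e. $\varphi t^2-t-1<0$, whose positive root $\tfrac{1+\sqrt{1+4\varphi}}{2\varphi}$ exceeds $1$). The base case is $t_1=2>\varphi$; the step $t_{2m-1}>\varphi\Rightarrow t_{2m}=\psi(t_{2m-1})<1$, followed by $t_{2m}\in(0,1)\Rightarrow t_{2m+1}=\psi(t_{2m})>\varphi$, closes the induction. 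Hence $a_n>b_n$ on the odd integers and $a_n<b_n$ on the even integers, so $F$ is of the oscillating type with $\{n_m^{(a)}\}$, $\{n_m^{(b)}\}$ the odd and even integers respectively.

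For the entropy, since the limit (\ref{17}) exists \cite{PS-2017complexity} it equals the limit along the odd subsequence, and on odd $n$ we have $a_n\ge b_n$, so $\ln\lvert B_n(X^{\mathcal B})\rvert=\ln(a_n+b_n)=\alpha_n+O(1)$, the $O(1)$ being killed after dividing by $\lvert E_n\rvert=2^{n+1}-1$. Thus $h(X^{\mathcal B})=\lim_{j}\alpha_{2j-1}/(4^{j}-1)$. The key algebraic step is the two-step recursion obtained by eliminating the even terms via $b_{2j}=a_{2j-1}^2$ and $a_{2j}=a_{2j-1}b_{2j-1}+b_{2j-1}^2$, namely $a_{2j+1}=b_{2j}(a_{2j}+b_{2j})=a_{2j-1}^4\,r_{2j-1}^{(a)}$, where $r_{2j-1}^{(a)}=\frac{a_{2j-1}^2+a_{2j-1}b_{2j-1}+b_{2j-1}^2}{a_{2j-1}^2}=1+t_{2j}$ is the quotient of the two-step expression by its leading term $a_{2j-1}^2$. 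Setting $u_j=\alpha_{2j-1}$ this reads $u_{j+1}=4u_j+\ln r_{2j-1}^{(a)}$ with $u_1=\alpha_1$, whose solution $u_j=4^{\,j-1}\alpha_1+\sum_{i=1}^{j-1}4^{\,j-1-i}\ln r_{2i-1}^{(a)}$ I would divide by $4^{j}-1$ and let $j\to\infty$ to obtain exactly $h(X^{\mathcal B})=\tfrac14\big(\alpha_1+\sum_{i\ge1}2^{-2i}\ln r_{2i-1}^{(a)}\big)$. Convergence of the series is immediate from $1<r_{2i-1}^{(a)}=1+t_{2i}<2$ (as $t_{2i}\in(0,1)$), which bounds $\ln r_{2i-1}^{(a)}$ against the summable weights $4^{-i}$.

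The step I expect to be the main obstacle is Part 1, specifically pinning down the correct thresholds: a naive induction with only $t_{\mathrm{odd}}>1$ and $t_{\mathrm{even}}<1$ does not close, because $\psi(t)<1$ requires $t>\varphi$ rather than merely $t>1$, so one must strengthen the odd hypothesis to $t_{2m-1}>\varphi$ and verify $\psi((0,1))\subset(\varphi,\infty)$. A secondary subtlety in Part 2 is conceptual rather than computational: the single-step matrix method used for the dominating type cannot be applied directly here, since the factor $1+1/t_{n-1}$ is unbounded on the even subsequence (where $t_{2m}\to0$) and the resulting series would diverge; passing to the two-step recursion along the dominant (odd) subsequence is exactly what restores bounded correction factors $r_{2i-1}^{(a)}$ and a convergent series.
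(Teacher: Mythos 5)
Your proof is correct and follows essentially the same route as the paper: you analyze the ratio $\tau_n=a_n/b_n$ under the map $t\mapsto 1/t+1/t^2$ to trap odd and even iterates in disjoint intervals (the paper uses the thresholds $\tau\geq 2$ and $\tau\leq 3/4$ where you use $\tau>\varphi$ and $\tau\in(0,1)$; both inductions close), and then pass to the two-step recursion $a_{2j+1}=a_{2j-1}^4r_{2j-1}^{(a)}$ along the dominant odd subsequence, invoking the existence of the limit to conclude. Your scalar form of that recursion, the identity $r_{2j-1}^{(a)}=1+\tau_{2j}\in(1,2)$ giving explicit convergence of the series, and the remark on why the one-step correction factors are unbounded are welcome refinements of the paper's $2\times 2$ matrix iteration, but the argument is the same.
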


\begin{proof}
Note that the SNRE $F$ is of the following form 
\begin{equation}
\left\{ 
\begin{array}{c}
a_{n}=a_{n-1}b_{n-1}+b_{n-1}^{2}, \\ 
b_{n}=a_{n-1}^{2}, \\ 
a_{1}=2,\text{ }b_{1}=1.%
\end{array}%
\right.  \label{1}
\end{equation}%
Let $\tau _{n}=\frac{a_{n}}{b_{n}}$, it can be easily checked that $\tau
_{n}=\frac{1}{\tau _{n-1}}+\frac{1}{\tau _{n-1}^{2}}$. Note that $\tau
_{1}=2 $ and $\tau _{2}=\frac{3}{4}$. We claim that $\tau _{n}\geq 2$
implies $\tau _{n+1}\leq \frac{3}{4}$ and $\tau _{n}\leq \frac{3}{4}$ infers 
$\tau _{n+1}\geq 2$. Indeed, if $\tau _{n}\geq 2$ then $\tau _{n+1}=\frac{1}{%
\tau _{n}}+\left( \frac{1}{\tau _{n}}\right) ^{2}\leq \frac{1}{2}+\left( 
\frac{1}{2}\right) ^{2}=\frac{3}{4}$. Whenever $\tau _{n}\leq \frac{3}{4}$,
then $\tau _{n+1}\geq \frac{4}{3}+(\frac{4}{3})^{2}=\frac{28}{9}\geq 2$.
Thus we have $\gamma _{2n+1}>1$ and $\gamma _{2n}<1$. This means $a_{n}>b_{n}
$ if $n$ is odd and $a_{n}<b_{n}$ if $n$ is even. That is, $F$ is of the
oscillating type. In this case, there is no dominate symbol. However, if $n$
is odd, $a_{n}$ is still a dominate symbol, and $b_{n}$ is a dominate symbol
for $n$ being even. Expand $F$ to the $(n-2)$ order and arrange $F^{(a)}$
and $F^{(b)}$ as follows.%
\begin{equation}
\left\{ 
\begin{array}{c}
a_{n+2}=a_{n}^{4}+a_{n}^{3}b_{n}+a_{n}^{2}b_{n}^{2}, \\ 
b_{n+2}=a_{n}^{2}b_{n}^{2}+2a_{n}b_{n}^{3}+b_{n}^{4}.%
\end{array}%
\right.  \label{2}
\end{equation}%
Recall that $\alpha _{n}=\ln a_{n}$, $\beta _{n}=\ln b_{n}$, and $%
v_{n}=\left( 
\begin{array}{c}
\alpha _{n} \\ 
\beta _{n}%
\end{array}%
\right) $. We have $v_{n+2}=Pv_{n}+\ln r_{n}$, where $P=\left( 
\begin{array}{cc}
2^{2} & 0 \\ 
2 & 2%
\end{array}%
\right) $, and 
\begin{equation*}
\ln r_{n}=\left( 
\begin{array}{c}
\ln (1+\frac{b_{n}}{a_{n}}+\left( \frac{b_{n}}{a_{n}}\right) ^{2}) \\ 
\ln (1+2\frac{b_{n}}{a_{n}}+\left( \frac{b_{n}}{a_{n}}\right) ^{2})%
\end{array}%
\right) =\left( 
\begin{array}{c}
\ln r_{n}^{(a)} \\ 
\ln r_{n}^{(b)}%
\end{array}%
\right) \text{.}
\end{equation*}%
Iterating $v_{n+2}$ we obtain that $v_{2n+1}=P^{n}\alpha
_{1}+\sum_{j=1}^{n}P^{n-j}\ln r_{2j-1}$. Since $P^{n}=\left( 
\begin{array}{cc}
2^{2n} & 0 \\ 
2^{2n}-2^{n} & 2^{n}%
\end{array}%
\right) $, we have%
\begin{equation}
\alpha _{2n+1}=2^{2n}\left( \alpha _{1}+\sum_{j=1}^{n}2^{-2j}\ln
r_{2j-1}^{(a)}\right) \text{.}  \label{25}
\end{equation}%
Set $A_{n}=\left( \alpha _{1}+\sum_{j=1}^{n}2^{-2j}\ln r_{2j-1}^{(a)}\right) 
$, then $\lim_{n\rightarrow \infty }A_{n}$ exists and define $A_{\infty
}=\lim_{n\rightarrow \infty }A_{n}$. Note that $\left\vert
E_{2n+1}\right\vert =\sum_{j=0}^{2n+1}2^{j}=2^{2n+2}-1$. Since $a$ is a
dominate symbol for $n$ is odd. Combining Lemma \ref{Lma: 2}, Proposition %
\ref{Prop: 3} with (\ref{25}) yields 
\begin{equation}
\lim_{n\rightarrow \infty }\frac{\ln a_{2n+1}}{\left\vert
E_{2n+1}\right\vert }=\frac{1}{4}\lim_{n\rightarrow \infty }\left( \alpha
_{1}+\sum_{j=1}^{n}2^{-2j}\ln r_{2j-1}^{(a)}\right) =\frac{A_{\infty }}{4}%
\text{,}  \label{14}
\end{equation}%
Combining (\ref{14}) with the fact that the limit (\ref{17}) exists, we have 
$h(X^{\mathcal{B}})=\lim_{n\rightarrow \infty }\frac{\ln a_{2n+1}}{%
\left\vert E_{2n+1}\right\vert }=\frac{A_{\infty }^{(a)}}{4}$. This
completes the proof.
\end{proof}

\subsection{Complete characterization}

Recall that $\left\vert F^{(\ast )}\right\vert $ denotes the number of items
of $F^{(\ast )}$ for $\ast =a$ or $b$.

\begin{lemma}
\label{Lma: 4}Let $F$ be such that 
\begin{equation*}
\left\{ 
\begin{array}{c}
a_{n}=2a_{n-1}b_{n-1}, \\ 
b_{n}=a_{n-1}^{2}, \\ 
a_{1}=2\text{, }b_{1}=1.%
\end{array}%
\right.
\end{equation*}%
Then $F$ is of the dominating type.
\end{lemma}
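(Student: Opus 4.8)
The plan is to mimic the ratio analysis used in the proof of Proposition \ref{Prop: 5}. I set $\tau_n = a_n/b_n$ and show that this ratio stays at least $1$ for every $n$, so that $a$ is a dominate symbol and hence $F \in \mathbf{D}$.

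First I would compute the recursion satisfied by $\tau_n$. Dividing the two defining equations gives
\[
\tau_n = \frac{a_n}{b_n} = \frac{2a_{n-1}b_{n-1}}{a_{n-1}^2} = \frac{2b_{n-1}}{a_{n-1}} = \frac{2}{\tau_{n-1}},
\]
so the ratio obeys the self-inverting recursion $\tau_n = 2/\tau_{n-1}$ with initial value $\tau_1 = a_1/b_1 = 2$.

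Next I would read off the explicit behavior. Since $\tau_1 = 2$ and $\tau_2 = 2/2 = 1$, and the map $t \mapsto 2/t$ interchanges the values $2$ and $1$, the sequence is periodic of period two: $\tau_n = 2$ when $n$ is odd and $\tau_n = 1$ when $n$ is even. In either case $\tau_n \geq 1$, that is, $a_n \geq b_n$ for all $n \in \mathbb{N}$. By the definition of a dominate symbol this shows that $a$ is a dominate symbol, and therefore $F$ is of the dominating type.

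The computation is elementary, so there is no serious obstacle; the argument is essentially a one-line ratio recursion. The only point worth emphasizing is the contrast with the oscillating type. Although the ratio $\tau_n$ genuinely oscillates between the two values $2$ and $1$, both values are at least $1$, so---unlike the situation in Proposition \ref{Prop: 5}, where the analogous ratio crossed below $1$ and forced the roles of the two symbols to alternate---here the same symbol $a$ dominates for every $n$. This is precisely what places $F$ in type $\mathbf{D}$ rather than in type $\mathbf{O}$.
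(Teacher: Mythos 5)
Your proof is correct and is essentially identical to the paper's: both set $\tau_n = a_n/b_n$, derive the recursion $\tau_n = 2/\tau_{n-1}$ (equivalently $\tau_n\tau_{n-1}=2$), observe the period-two values $\tau_{2n-1}=2$, $\tau_{2n}=1$, and conclude $a_n \ge b_n$ for all $n$, so $a$ is a dominate symbol and $F\in\mathbf{D}$. Your closing remark contrasting this with the oscillating case of Proposition \ref{Prop: 5} is a nice addition but not a difference in method.
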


\begin{proof}
Let $\tau _{n}=\frac{a_{n}}{b_{n}}$. Then we have $\tau _{n}=2\frac{b_{n-1}}{%
a_{n-1}}=2\frac{1}{\tau _{n-1}}$, i.e., $\tau _{n}\tau _{n-1}=2$. Since $%
\tau _{1}=2$ and $\tau _{2}=1$, it can be easily checked that $\tau
_{2n-1}=2 $ and $\tau _{2n}=1$. This shows that $F$ is of the dominating
type.
\end{proof}

\begin{theorem}
\label{Thm: 3}Let $(d,k)=(2,2)$ and $X^{\mathcal{B}}$ be an $S_{d}$-SFT.
Suppose $F$ is its SNRE, then $F$ is either one of the following four types.

\begin{enumerate}
\item $F$ is of the equal growth type,

\item $F$ is of the dominating type,

\item $F$ is of the oscillating type,

\item $F$ is of the cooperating type.
\end{enumerate}
\end{theorem}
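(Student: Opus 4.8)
The plan is to use the rigid product structure that an $S_2$-SFT imposes on its SNRE to reduce the claim to a finite check. The decisive observation is that, for each symbol $i$, the admissible block set $\mathcal{B}^{(i)}$ factors as a Cartesian product $\{t:(i,s_1,t)\notin\mathcal{F}\}\times\{t:(i,s_2,t)\notin\mathcal{F}\}$ over the two generators. By Theorem \ref{Thm: 4} this forces $F^{(i)}$ itself to factor, $F^{(i)}=\bigl(\sum_{(i,s_1,t)\notin\mathcal{F}}\eta_{n-1}(t)\bigr)\bigl(\sum_{(i,s_2,t)\notin\mathcal{F}}\eta_{n-1}(t)\bigr)$, a product of two linear forms in $a_{n-1},b_{n-1}$. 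Discarding the degenerate case of an empty factor (for which the symbol cannot sit at an interior vertex), each factor is $a_{n-1}$, $b_{n-1}$ or $a_{n-1}+b_{n-1}$, so each of $F^{(a)},F^{(b)}$ is one of the six forms $a^2,\,ab,\,b^2,\,a(a+b),\,b(a+b),\,(a+b)^2$. There are therefore only finitely many SNREs, indexed by the ordered pair of factor choices, and the symbol swap $a\leftrightarrow b$ (which fixes $ab$ and $(a+b)^2$, interchanges $a^2\leftrightarrow b^2$ and $a(a+b)\leftrightarrow b(a+b)$, and exchanges $F^{(a)}$ with $F^{(b)}$) roughly halves the number of cases.

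First I would split on the initial data. Since $a_1=F^{(a)}(1,1)$ and $b_1=F^{(b)}(1,1)$ take the values $1,1,1,2,2,4$ on the six forms, $a_1$ records only how many of the two factors of $F^{(a)}$ equal $a+b$ (and similarly for $b_1$). Whenever $a_1=b_1$, Proposition \ref{Prop: 1} gives $a_n=b_n$ for all $n$, so $F$ is of type $\mathbf{E}$; this settles every configuration in which $F^{(a)}$ and $F^{(b)}$ carry the same number of $(a+b)$-factors.

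It remains to handle $a_1\neq b_1$, and by the swap symmetry I may assume $a_1>b_1$. Here I would first try to exhibit a dominate symbol by induction: under the hypothesis $a_{n-1}\geq b_{n-1}\geq 0$, one compares the two quadratics on the cone $\{x\geq y\geq 0\}$ and verifies $F^{(a)}(a_{n-1},b_{n-1})\geq F^{(b)}(a_{n-1},b_{n-1})$, which both keeps the orbit inside the cone and proves $a_n\geq b_n$. A direct inspection shows this inequality holds in every configuration except the single one (up to symmetry) $F^{(a)}=b(a+b)=a_{n-1}b_{n-1}+b_{n-1}^{2}$, $F^{(b)}=a^{2}=a_{n-1}^{2}$; in all the others $F$ is of type $\mathbf{D}$. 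The exceptional configuration is precisely the SNRE of Proposition \ref{Prop: 5}, where the leading power of $a_{n-1}$ alternates between $F^{(a)}$ and $F^{(b)}$ so that no symbol dominates; passing to $\tau_n=a_n/b_n$, which obeys $\tau_n=\tau_{n-1}^{-1}+\tau_{n-1}^{-2}$, and running the interval argument of Proposition \ref{Prop: 5} ($\tau_n\geq 2$ forces $\tau_{n+1}\leq 3/4$ and conversely) shows $a_n-b_n$ changes sign at every step, placing $F$ in type $\mathbf{O}$. Throughout, type $\mathbf{C}$ remains available as a uniform description, since each factor is at most $a_{n-1}+b_{n-1}$, so $F^{(a)},F^{(b)}\leq c_{n-1}^{2}$ with $c_n=a_n+b_n$, whence $c_n=c_{n-1}^{2}+g_{n-1}$ with $g_{n-1}\leq c_{n-1}^{2}$ automatically.

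The one genuinely delicate point is the oscillating case: to place the exceptional configuration in $\mathbf{O}$ rather than in a merely eventually-dominated regime, one must show that $a_n-b_n$ alternates in sign for \emph{all} $n$, and this requires the fixed-point/interval analysis of the induced one-dimensional map $\tau\mapsto\tau^{-1}+\tau^{-2}$ carried out in Proposition \ref{Prop: 5}, not a one-step induction. The remainder is bookkeeping: checking that the six-form reduction is exhaustive, accounting correctly for the swap symmetry, and disposing of the degenerate empty-factor configurations in which some symbol is forbidden at every interior vertex.
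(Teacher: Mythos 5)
Your central structural claim --- that for an $S_2$-SFT each $\mathcal{B}^{(i)}$ is a Cartesian product over the two generators, so that $F^{(a)}$ and $F^{(b)}$ are each one of the six factorable forms $a^2,\ ab,\ b^2,\ a(a+b),\ b(a+b),\ (a+b)^2$ --- restricts the theorem to a proper subclass of the systems it is actually about. The theorem quantifies over all SNREs of degree $(2,2)$, i.e.\ over arbitrary basic sets $\mathcal{B}$ of admissible $2$-blocks, equivalently over all indicator vectors $(c_1,c_2,c_3)$ with $c_1,c_3\in\{0,1\}$, $c_2\in\{0,1,2\}$; the product structure holds only when $\mathcal{B}^{(i)}$ comes from single-edge forbidden transitions. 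Your enumeration therefore omits the non-factorable forms $a^2+b^2$, $2ab$, $a^2+2ab$, $2ab+b^2$, $a^2+ab+b^2$, and with them essentially all of the named systems the paper's own proof must handle: $F_I,F_{II},F_{IV}$ (placed in $\mathbf{C}$), and $F_{III},F_V,F_{VII}$ (placed in $\mathbf{D}$). Only the oscillating system $F_{VI}$ and the golden-mean-type systems survive your reduction. That the paper intends the wider scope is visible from its numerical tables, which tabulate entropies for exactly these non-product indicator vectors.

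The omission is not merely bookkeeping, because your proposed uniform method --- verify $F^{(a)}(x,y)\geq F^{(b)}(x,y)$ pointwise on the cone $x\geq y\geq 0$ and close the induction --- fails on some of the missing cases even though they are still of type $\mathbf{D}$. For $F_V$ ($a_n=2a_{n-1}b_{n-1}$, $b_n=a_{n-1}^2$) the pointwise inequality $2xy\geq x^2$ is false on part of the cone; domination holds only because the orbit satisfies $\tau_n\tau_{n-1}=2$ with $\tau_n\in\{1,2\}$, which is the content of Lemma \ref{Lma: 4}. Similarly $F_{III}$ and $F_{VII}$ are handled in the paper by one-dimensional analysis of the ratio map ($\tau_n=\tfrac12+\tfrac{\tau_{n-1}}{2}+\tfrac1{2\tau_{n-1}}$, resp.\ $\tau_n=\tau_{n-1}+\tau_{n-1}^{-1}$), not by a cone comparison. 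Finally, your remark that type $\mathbf{C}$ holds ``automatically'' because $g_{n-1}\leq c_{n-1}^2$ reads the definition too literally: it would require allowing $g_{n-1}$ to be negative, which trivializes the classification (every SNRE would be of type $\mathbf{C}$ and the theorem would say nothing); in the paper's usage $g_{n-1}\geq 0$ is the surplus left after extracting $c_{n-1}^2=(a_{n-1}+b_{n-1})^2$, and identifying when this extraction is possible is part of the case analysis. Within the product-form subclass your argument is sound and correctly isolates the unique oscillating configuration, but as a proof of Theorem \ref{Thm: 3} it covers too few cases and its main tool does not extend to the ones it misses.
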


\begin{proof}
Recall that $\left\vert F^{(\ast )}\right\vert $ is the number of items in $%
F^{(\ast )}$. Without loss of generality, we only discuss the case where $%
\left\vert F^{(a)}\right\vert >\left\vert F^{(b)}\right\vert $, the case
where $\left\vert F^{(a)}\right\vert <\left\vert F^{(b)}\right\vert $ is
similar. For the case where $\left\vert F^{(a)}\right\vert =\left\vert
F^{(b)}\right\vert $, Proposition \ref{Prop: 2} is applied to show it is of
type $\mathbf{E}$. Thus we divide the discussion into the following small
cases. \textbf{(i)} $\left\vert F^{(a)}\right\vert =4$. In this case, we
know that $a_{n}\geq b_{n}$, thus $a$ is a dominate symbol. Thus $F\in 
\mathbf{D}$; \textbf{(ii)} $\left\vert F^{(a)}\right\vert =3$. We only
discuss the following subcases (a) $\left\vert F^{(b)}\right\vert =1$. Since 
$\left\vert F^{(a)}\right\vert =3$ there are only three possibilities: (1) $%
a_{n}=a_{n-1}^{2}+2a_{n-1}b_{n-1}$; (2) $a_{n}=2a_{n-1}b_{n-1}+b_{n-1}^{2}$;
(3) $a_{n}=a_{n-1}^{2}+a_{n-1}b_{n-1}+b_{n-1}^{2}$. In case (3), since $%
\left\vert F^{(b)}\right\vert =1$, we conclude that $a$ is a dominate
symbol. Thus $F\in \mathbf{D}$. In case (2), if $b_{n}=a_{n-1}b_{n-1}$ or $%
b_{n-1}^{2}$, then $a$ is also a dominate symbol and $F\in \mathbf{D}$. Thus
we only to discuss $b_{n}=a_{n-1}^{2}$. That is 
\begin{equation*}
F_{I}=\left\{ 
\begin{array}{c}
a_{n}=2a_{n-1}b_{n-1}+b_{n-1}^{2}, \\ 
b_{n}=a_{n-1}^{2}, \\ 
a_{1}=3,b_{1}=1.%
\end{array}%
\right.
\end{equation*}%
It can be easily checked that $F_{I}\in \mathbf{C}$. For case (1), if $%
b_{n}=a_{n-1}^{2}$ or $a_{n-1}b_{n-1}$, then $a$ is a dominate symbol and $%
F\in \mathbf{D}$, thus we only need to discuss $b_{n}=b_{n-1}^{2}$. However,
in this case $b_{n}=1$, i.e., $b\in \mathcal{I}(F)$. Thus $a$ is still a
dominate symbol, i.e., $F\in \mathbf{D}$. (b) $\left\vert F^{(b)}\right\vert
=2$. It follows the same argument as case (a), we also has three cases (1) -
(3) and $b_{n}$ has the following cases: $b_{n}=a_{n-1}^{2}+a_{n-1}b_{n-1}$
or $2a_{n-1}b_{n-1}$ or $a_{n-1}b_{n-1}+b_{n-1}^{2}$ or $%
a_{n-1}^{2}+b_{n-1}^{2}$. Under the same arguments as above, we only need to
discuss the following cases.%
\begin{eqnarray*}
F_{II} &=&\left\{ 
\begin{array}{c}
a_{n}=2a_{n-1}b_{n-1}+b_{n-1}^{2}, \\ 
b_{n}=a_{n-1}^{2}+a_{n-1}b_{n-1}, \\ 
a_{1}=3,b_{1}=2.%
\end{array}%
\right. \text{ } \\
F_{III} &=&\left\{ 
\begin{array}{c}
a_{n}=a_{n-1}^{2}+a_{n-1}b_{n-1}+b_{n-1}^{2}, \\ 
b_{n}=2a_{n-1}b_{n-1}, \\ 
a_{1}=3,b_{1}=2.%
\end{array}%
\right. \\
F_{IV} &=&\left\{ 
\begin{array}{c}
a_{n}=a_{n-1}^{2}+2a_{n-1}b_{n-1}, \\ 
b_{n}=a_{n-1}b_{n-1}+b_{n-1}^{2}, \\ 
a_{1}=3,b_{1}=2.%
\end{array}%
\right.
\end{eqnarray*}%
For $F_{II}$, we see that $%
(a_{n}+b_{n})=(a_{n-1}+b_{n-1})^{2}+a_{n-1}b_{n-1} $, thus $F\in \mathbf{C}$%
. For $F_{III}$, let $\tau _{n}=\frac{a_{n}}{b_{n}}$, then we have $\tau
_{n}=\frac{1}{2}+\frac{\tau _{n-1}}{2}+\frac{1}{2\tau _{n-1}}$. Since $\tau
_{1}=\frac{3}{2}>1$, we conclude that $\tau _{n}>1$ for all $n\geq 1$ by
induction, i.e., $a$ is a dominate symbol and $F_{III}\in \mathbf{D}$. The
discussion of $F_{IV}$ is the same as $F_{II}$; that is, $F_{IV}\in \mathbf{C%
}$; \textbf{(iii)} $\left\vert F^{(a)}\right\vert =2$. There are only four
possibilities: (1) $a_{n}=a_{n-1}^{2}+a_{n-1}b_{n-1}$; (2) $%
a_{n}=2a_{n-1}b_{n-1}$; (3) $a_{n}=a_{n-1}b_{n-1}+b_{n-1}^{2}$ and (4). $%
a_{n}=a_{n-1}^{2}+b_{n-1}^{2}$. Since $\left\vert F^{(a)}\right\vert
>\left\vert F^{(b)}\right\vert $, we only need to discuss the case where $%
\left\vert F^{(b)}\right\vert =1$. That is, $b_{n}=a_{n-1}^{2}$ or $%
a_{n-1}b_{n-1}$ or $b_{n-1}^{2}$. If $b_{n}=b_{n-1}^{2}$, then $b\in 
\mathcal{I}(F)$ and $a$ is a dominate symbol; this means $F\in \mathbf{D}$.
Thus we only have the following three cases.%
\begin{eqnarray*}
F_{V} &=&\left\{ 
\begin{array}{c}
a_{n}=2a_{n-1}b_{n-1}, \\ 
b_{n}=a_{n-1}^{2}, \\ 
a_{1}=2,b_{1}=1.%
\end{array}%
\right. \text{ }F_{VI}=\left\{ 
\begin{array}{c}
a_{n}=a_{n-1}b_{n-1}+b_{n-1}^{2}, \\ 
b_{n}=a_{n-1}^{2}, \\ 
a_{1}=2,b_{1}=1.%
\end{array}%
\right. \\
F_{VII} &=&\left\{ 
\begin{array}{c}
a_{n}=a_{n-1}^{2}+b_{n-1}^{2}, \\ 
b_{n}=a_{n-1}b_{n-1}, \\ 
a_{1}=2,b_{1}=1.%
\end{array}%
\right.
\end{eqnarray*}%
For case $F_{V}$, Lemma \ref{Lma: 4} shows that it is of the type $\mathbf{D}
$. The proof of Proposition \ref{Prop: 5} indicates that $F_{VI}\in \mathbf{O%
}$. For case $F_{VII}$, we assume $\tau _{n}=\frac{a_{n}}{b_{n}}=\tau _{n-1}+%
\frac{1}{\tau _{n-1}}$. Since $\tau _{1}\geq 2$, it can be checked that $%
\tau _{n}\geq 2$ for all $n$ by induction. Thus $a$ is a dominate symbol and 
$F_{VII}\in \mathbf{D}$. This completes the proof.
\end{proof}

\subsection{Numerical results}

Recall that $v_{F}=(f_{\mathbf{a}})_{\mathbf{a}\in \mathcal{A}^{d}}$ is the
indicator vector of $F$ (Definition \ref{Def: 1}). For $k=2$ we denote by $%
v^{(a)}=v_{F^{(a)}}$ and $v^{(b)}=v_{F^{(b)}}$. We provide the numerical
results for the entropies of the case where $(d,k)=(2,2)$. Due to the
symmetry of the SNRE, i.e., the cases of $(v^{(a)},v^{(b)})=(v_{1},v_{2})$
and $(v^{(a)},v^{(b)})=(v_{2},v_{1})$ define the same SNRE for vectors $%
v_{1} $ and $v_{2}$. It reduces the discussion to those cases of $\left\vert
F^{(a)}\right\vert >\left\vert F^{(b)}\right\vert $. The case where $%
\left\vert F^{(a)}\right\vert =\left\vert F^{(b)}\right\vert $ immediately
follows from the Proposition \ref{Prop: 2}, thus we omit it. The following
tables are the entropies of the cases where $\left\vert F^{(a)}\right\vert
=2,3$, and $4$, respectively.

\begin{equation*}
\begin{tabular}{|l|l|l|l|l|}
\hline
$v^{(b)}\backslash v^{(a)}$ & $(1,1,0)$ & $(1,0,1)$ & $(0,1,1)$ & $(0,2,0)$
\\ \hline
$(1,0,0)$ & 0.285443 & 0.254262 & 0.214332 & 0.346235 \\ \hline
$(0,1,0)$ & 0.253877 & 0.216424 & 0.252677 & 0.295580 \\ \hline
$(0,0,1)$ & 0.234348 & 0.203677 & 0 & 0 \\ \hline
\end{tabular}%
\end{equation*}%
\begin{equation*}
\begin{tabular}{|l|l|l|l|}
\hline
$v^{(b)}\backslash v^{(a)}$ & $(1,1,1)$ & $(1,2,0)$ & $(0,2,1)$ \\ \hline
$(1,0,0)$ & 0.404347 & 0.429271 & 0.517933 \\ \hline
$(0,1,0)$ & 0.346538 & 0.372742 & 0.427385 \\ \hline
$(0,0,1)$ & 0.325765 & 0.346574 & 0 \\ \hline
$(1,1,0)$ & 0.474630 & 0.490218 & 0.527259 \\ \hline
$(1,0,1)$ & 0.462992 & 0.480426 & 0.523983 \\ \hline
$(0,1,1)$ & 0.432619 & 0.451472 & 0.516799 \\ \hline
$(0,2,0)$ & 0.455134 & 0.472200 & 0.522268 \\ \hline
\end{tabular}%
\end{equation*}

\begin{equation*}
\begin{tabular}{|l|l|}
\hline
$v^{(b)}\backslash v^{(a)}$ & $(1,2,1)$ \\ \hline
$(1,0,0)$ & 0.508156 \\ \hline
$(0,1,0)$ & 0.432802 \\ \hline
$(0,0,1)$ & 0.407355 \\ \hline
$(1,1,0)$ & 0.570417 \\ \hline
$(1,0,1)$ & 0.556489 \\ \hline
$(0,1,1)$ & 0.507662 \\ \hline
$(0,2,0)$ & 0.537203 \\ \hline
$(1,1,1)$ & 0.625995 \\ \hline
$(1,2,0)$ & 0.633417 \\ \hline
$(0,2,1)$ & 0.611294 \\ \hline
\end{tabular}%
\end{equation*}

\section{Hom-shifts}

\subsection{Golden mean shifts and $k$-colored chessboard}

Let $k\geq 2$ and $\mathcal{A}$ be the set of symbols such that $|\mathcal{A}| =k$. Suppose $\mathbb{A}=(A_{i})_{i=1}^{d}\in \mathcal{M}_{k}^{d}$ is the $d$-tuple of the $k\times k$ binary matrices
such that $A_{i}$ is indexed by $\mathcal{A}$. The $S_{d}$\emph{-vertex shift%
} $X_{\mathbb{A}}$ is defined by $\mathbb{A}$ in which $A_{i}(a,b)=0$ if and
only if $\left( a,s_{i},b\right) \in \mathcal{F}$. That is 
\begin{equation*}
X_{\mathbb{A}}=\{x\in \mathcal{A}^{S_{d}}:A_{i}(x(g),x(gs_{i}))=1\text{ for }%
g\in G\text{, }1\leq i\leq d\}\text{.}
\end{equation*}%
In \cite{ban2017mixing}, we prove that an $S_{d}$-SFT is conjugate to an $%
S_{d}$-vertex shift and vice versa. We call an $S_{d}$-vertex shift $X=X_{%
\mathbb{A}}$ \emph{hom-shift }if $A_{i}=A\in \mathcal{M}_{k}$ for all $i$,
and we write $X_{\mathbb{A}}=X_{A}$ if it causes no confusion. $X_{\mathbb{A}%
}$ is called a \emph{golden mean shift} if $X$ is a hom-shift and $A$ is of
the following form%
\begin{equation}
A(i,j)=\left\{ 
\begin{array}{cc}
0\text{,} & i=j=k\text{;} \\ 
1\text{,} & \text{otherwise.}%
\end{array}%
\right.  \label{10}
\end{equation}%
A hom-shift $X_{A}$ is called \emph{k-colored chessboard }if 
\begin{equation*}
A(i,j)=\left\{ 
\begin{array}{cc}
0\text{,} & i=j\text{;} \\ 
1\text{,} & \text{otherwise.}%
\end{array}%
\right.
\end{equation*}

Recall that $\mathbf{D}$ (resp. $\mathbf{C,O,E}$) denotes the collection of
SNREs which belong to the dominating type (resp. cooperating, oscillating
and equal growth types)

\begin{theorem}
\label{Thm: 1}Let $d,k\in \mathbb{N}$.

\begin{enumerate}
\item If $X_{A}$ is a golden mean shift, then $X_{A}\in \mathbf{C}\cap 
\mathbf{D}$\textbf{.}

\item If $X_{A}$ is a $k$-colored chessboard, then $X_{A}\in \mathbf{E}$%
\textbf{.}
\end{enumerate}
\end{theorem}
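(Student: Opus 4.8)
The plan is to read the SNRE of each hom-shift straight off Theorem \ref{Thm: 4} and then recognise the defining feature of the relevant type; throughout I write $c_n=\sum_{i=1}^{k}\gamma_{i;n}$ and read the four types in their natural $(d,k)$ form (type $\mathbf{E}$: all $\gamma_{i;n}$ coincide; type $\mathbf{D}$: one symbol dominates every other for all $n$; type $\mathbf{C}$: $c_n=c_{n-1}^{d}+g_{n-1}$ with leading term $c_{n-1}^{d}$ and a correction $g_n$ of order $c_n^{d}$). First I would write down the basic sets. For a golden mean shift the only forbidden transitions are $(k,s_j,k)$, so $\mathcal{B}^{(i)}$ is all of $\{(i,i_1,\ldots,i_d)\}$ when $i\neq k$ and consists of those tuples with every $i_j\neq k$ when $i=k$; for a $k$-colored chessboard $\mathcal{B}^{(i)}$ consists of all $(i,i_1,\ldots,i_d)$ with every $i_j\neq i$. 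Substituting into (\ref{20}) gives, for the golden mean shift,
\begin{equation*}
\gamma_{i;n}=c_{n-1}^{d}\ (i\neq k),\qquad \gamma_{k;n}=(c_{n-1}-\gamma_{k;n-1})^{d},
\end{equation*}
and, for the chessboard, $\gamma_{i;n}=(c_{n-1}-\gamma_{i;n-1})^{d}$ for every $i$.

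For part (2) I would show by induction that all the $\gamma_{i;n}$ coincide. Since $\gamma_{i;0}=1$ for every $i$, the base case holds; and if $\gamma_{1;n-1}=\cdots=\gamma_{k;n-1}=:\gamma$, then $c_{n-1}=k\gamma$ and the chessboard recursion gives $\gamma_{i;n}=((k-1)\gamma)^{d}=(k-1)^{d}\gamma^{d}$, a value independent of $i$. Hence $\gamma_{i;n}=\gamma_{j;n}$ for all $i,j,n$, which is exactly type $\mathbf{E}$ (the $(d,k)$ analogue of Proposition \ref{Prop: 2}). This part is essentially immediate from the permutation symmetry of the chessboard constraint.

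For part (1) I would treat $\mathbf{D}$ and $\mathbf{C}$ separately. For $\mathbf{D}$: any symbol $i\neq k$ satisfies $\gamma_{i;n}=c_{n-1}^{d}\ge (c_{n-1}-\gamma_{k;n-1})^{d}=\gamma_{k;n}$ and equals $\gamma_{j;n}$ for the remaining $j\neq k$, so (as $k\geq2$) symbol $1$ dominates every symbol for all $n$ and $X_A\in\mathbf{D}$. For $\mathbf{C}$: summing the recursion yields
\begin{equation*}
c_n=(k-1)c_{n-1}^{d}+(c_{n-1}-\gamma_{k;n-1})^{d}=c_{n-1}^{d}+g_{n-1},
\end{equation*}
where $g_{n-1}=(k-2)c_{n-1}^{d}+(c_{n-1}-\gamma_{k;n-1})^{d}$, the cooperating form with leading term $c_{n-1}^{d}$; from $0\le c_n-\gamma_{k;n}\le c_n$ one reads off $g_n\le (k-1)c_n^{d}$, so $X_A\in\mathbf{C}$. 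For $(d,k)=(2,2)$ this specialises to $c_n=c_{n-1}^{2}+a_{n-1}^{2}$ with $g_n=a_n^{2}\le c_n^{2}$, matching the running example.

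The one point needing care is the size estimate in part (1): for $(d,k)=(2,2)$ the bound $g_n\le c_n^{2}$ is literally the defining inequality, but for $k\ge 3$ the correction carries a $(k-2)c_{n-1}^{d}$ term, so I would record the cooperating property with the order bound $g_n\le (k-1)c_n^{d}$ (equivalently $g_{n-1}/c_{n-1}^{d}\le k-1$) rather than the literal $g_n\le c_n^{d}$ used in the $(2,2)$ definition. Once this order bound is in hand, membership in both $\mathbf{C}$ and $\mathbf{D}$ is established, and the entropy of the golden mean shift can then be extracted through the cooperating-type formula of Proposition \ref{Prop: 4}.
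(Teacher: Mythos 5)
Your proposal is correct and follows the same basic route as the paper: read the SNRE off Theorem \ref{Thm: 4} for each hom-shift and check the defining condition of the relevant type. The differences are worth recording. The paper proves the golden-mean case only for $k=2$ (arbitrary $d$), displaying the SNRE (\ref{15}) and asserting that $F\in\mathbf{C}\cap\mathbf{D}$ ``under the same discussion'' as in Propositions \ref{Prop: 3} and \ref{Prop: 4}, with the remaining $k$ left as ``similar''; you carry out the general-$k$ computation, and in doing so you surface a genuine point the paper glosses over: the literal $(2,2)$ cooperating condition $g_n\leq c_n^{d}$ fails once $k\geq 3$, since $g_{n-1}=(k-2)c_{n-1}^{d}+(c_{n-1}-\gamma_{k;n-1})^{d}$ already exceeds $c_{n-1}^{d}$. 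Your relaxed bound $g_n\leq (k-1)c_n^{d}$ is the right repair --- all the argument of Proposition \ref{Prop: 4} uses is that $\ln\bigl(1+g_j/c_j^{d}\bigr)$ is uniformly bounded so that the series $\sum_j d^{-j}\ln\bigl(1+g_j/c_j^{d}\bigr)$ converges, and $\ln k$ serves as well as $\ln 2$. For the chessboard, your recursion $\gamma_{i;n}=(c_{n-1}-\gamma_{i;n-1})^{d}=\bigl(\sum_{l\neq i}\gamma_{l;n-1}\bigr)^{d}$ is the correct instance of (\ref{20}); the paper's displayed formula $(k-1)^{d}\,a_{n-1}^{(1)}\cdots a_{n-1}^{(i-1)}a_{n-1}^{(i+1)}\cdots a_{n-1}^{(k)}$ is a product of degree $k-1$ rather than $d$ and agrees with the true recursion only after the symmetry $\gamma_{1;n}=\cdots=\gamma_{k;n}$ has been established --- your induction supplies exactly that step. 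In short, your version is a more careful and more general rendering of the paper's argument, and the only caveat is the one you already flag: membership in $\mathbf{C}$ for $k\geq 3$ holds only under the natural order-bound reading of the cooperating condition, not under its verbatim $(2,2)$ form.
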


\begin{proof}
Suppose $X_{A}$ is a golden mean shift. For clarity, we only prove the case
where $k=2$, and the other cases can be treated similarly. Let $\mathcal{A}%
=\{1,2\}$. Since $A$ is defined in (\ref{10}), we have $\mathcal{B}%
^{(1)}=\{(1,i_{1},i_{2},\ldots ,i_{d}):i_{j}=1$ or $2$ for $1\leq j\leq d\}$
and $\mathcal{B}^{(2)}=\{(2,1,1,\ldots ,1)\}$. Thus, the corresponding SNRE
with respect to $\mathcal{B}=\mathcal{B}^{(1)}\cup \mathcal{B}^{(2)}$ is of
the following form.%
\begin{equation}
\left\{ 
\begin{array}{c}
a_{n}=a_{n-1}^{d}+C_{1}^{d}a_{n-1}^{d-1}b_{n-1}+C_{2}^{d}a_{n-1}^{d-2}b_{n-1}^{2}+\cdots +b_{n-1}^{d}%
\text{,} \\ 
b_{n-1}=a_{n-1}^{d}\text{,} \\ 
a_{1}=2^{d}\text{ and }b_{1}=1\text{. }%
\end{array}%
\right.  \label{15}
\end{equation}%
It can be easily checked that $F=\{F^{(a)},F^{(n)}\}\in \mathbf{C}\cap 
\mathbf{D}$ under the same discussion in Proposition \ref{Prop: 3} and
Proposition \ref{Prop: 4}. Suppose $X_{A}$ is a $k$-colored chessboard. We
have $\mathcal{B}^{(i)}=\{(i,i_{1},i_{2},\ldots ,i_{d}):i_{j}\neq i$ for $%
1\leq j\leq d\}$. The corresponding SNRE is of the following form $%
a_{n}^{(i)}=(k-1)^{d}a_{n-1}^{(1)}\cdots
a_{n-1}^{(i-1)}a_{n-1}^{(i+1)}\cdots a_{n-1}^{(k)}$. Thus, $F\in \mathbf{E}$%
. The proof is complete.
\end{proof}

\begin{corollary}
\label{Cor: 1}Let $2\leq d\in \mathbb{N}$. Then

\begin{enumerate}
\item The entropy of an $S_{d}$-GMS for $k=2$ is $h(X_{A})=\frac{A_{\infty }%
}{d^{2}}$, where 
\begin{equation*}
A_{\infty }=\lim_{n\rightarrow \infty }A_{n}=d\ln 2+\lim_{n\rightarrow
\infty }\sum_{j=1}^{n-1}d^{-j}\ln r_{j}^{(a)}\text{.}
\end{equation*}

\item The entropy of a $k$-colored chessboard is $d\ln (k-1)$.
\end{enumerate}
\end{corollary}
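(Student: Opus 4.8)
The plan is to reduce each part to a scalar log-linear recursion, in direct analogy with the matrix analysis of Propositions \ref{Prop: 3} and \ref{Prop: 4}, but now carried out for general $d$. For part (1), I would first invoke Theorem \ref{Thm: 1}(1), which supplies the explicit SNRE (\ref{15}) for the $S_d$-GMS on $k=2$ symbols and asserts $X_A\in\mathbf{C}\cap\mathbf{D}$. The symbol $a$ is dominate because $a_n=b_n+\bigl(C_1^d a_{n-1}^{d-1}b_{n-1}+\cdots+b_{n-1}^{d}\bigr)\ge a_{n-1}^{d}=b_n$, so $b_n/a_n\le 1$ for every $n$. Factoring out the leading term $a_{n-1}^{d}$ of $F^{(a)}$ gives $a_n=a_{n-1}^{d}\,r_{n-1}^{(a)}$ with $r_{n-1}^{(a)}=1+C_1^d\frac{b_{n-1}}{a_{n-1}}+\cdots+\bigl(\frac{b_{n-1}}{a_{n-1}}\bigr)^{d}$, exactly as in (\ref{21}); dominance forces $1\le r_{n-1}^{(a)}\le 2^{d}$. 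Setting $\alpha_n=\ln a_n$ linearizes the recursion to $\alpha_n=d\,\alpha_{n-1}+\ln r_{n-1}^{(a)}$ with $\alpha_1=\ln a_1=d\ln 2$.

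Iterating this scalar recursion yields $\alpha_n=d^{n-1}\alpha_1+\sum_{j=1}^{n-1}d^{\,n-1-j}\ln r_j^{(a)}=d^{n-1}A_n$, where $A_n=d\ln 2+\sum_{j=1}^{n-1}d^{-j}\ln r_j^{(a)}$. Because $0\le \ln r_j^{(a)}\le d\ln 2$ uniformly in $j$, the geometric series $\sum_j d^{-j}\ln r_j^{(a)}$ converges and $A_n\to A_\infty$. Since $a$ is dominate, the argument of Lemma \ref{Lma: 2} shows $h(X_A)=\lim_n \frac{\ln a_n}{|E_n|}=\lim_n\frac{\alpha_n}{|E_n|}$; substituting $\alpha_n=d^{n-1}A_n$ together with $|E_n|=\frac{d^{n+1}-1}{d-1}$ and passing to the limit yields the asserted closed form. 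At $d=2$ this reproduces Proposition \ref{Prop: 3}(i) exactly.

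For part (2), I would use Theorem \ref{Thm: 1}(2): the chessboard SNRE is of type $\mathbf{E}$, so by the argument of Proposition \ref{Prop: 2} all coordinates coincide, $\gamma_{i;n}=a_n$. Since each of the $d$ children ranges independently over the $k-1$ admissible colors, the formula (\ref{20}) factorizes as $\gamma_{i;n}=\prod_{j=1}^{d}\bigl(\sum_{l\ne i}\gamma_{l;n-1}\bigr)=\bigl((k-1)a_{n-1}\bigr)^{d}$, collapsing to the single equation $a_n=(k-1)^d a_{n-1}^{d}$ with $a_1=(k-1)^d$. Taking logarithms gives the purely linear recursion $\alpha_n=d\,\alpha_{n-1}+d\ln(k-1)$, whose closed form $\alpha_n=d\ln(k-1)\,\frac{d^{n}-1}{d-1}$ I would divide by $|E_n|$; since $|B_n(X_A)|=k\,a_n$ and $\ln k$ is negligible against $|E_n|\to\infty$, the limit produces a constant multiple of $\ln(k-1)$.

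The two iterations and the uniform bound $1\le r_j^{(a)}\le 2^{d}$ (guaranteeing convergence of the series) are routine. I expect the only genuinely delicate point to be the bookkeeping of the leading constant in $\lim_n \alpha_n/|E_n|$: one must track the precise $d$-dependence of $|E_n|=\frac{d^{n+1}-1}{d-1}$ against the $d^{n-1}$ growth of $\alpha_n$ in part (1) and the $\frac{d^{n}-1}{d-1}$ growth in part (2). The safest way to pin this down is to compute the ratio $\frac{d^{n-1}}{|E_n|}$ (respectively $\frac{\alpha_n}{|E_n|}$) as an exact rational expression in $d$ before letting $n\to\infty$, and to verify that the resulting normalization factor specializes at $d=2$ to the $\tfrac14$ and $\tfrac12$ appearing in Propositions \ref{Prop: 3} and \ref{Prop: 2}; this consistency check is what ultimately fixes the constants in both displayed formulae.
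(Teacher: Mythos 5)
Your overall route is the same as the paper's: invoke Theorem \ref{Thm: 1} to get the explicit SNRE and the dominate symbol, linearize via $\alpha_n=\ln a_n$, and run the Proposition \ref{Prop: 3} (resp.\ Proposition \ref{Prop: 2}) iteration for general $d$. The recursions you set up are correct: $\alpha_n=d\,\alpha_{n-1}+\ln r_{n-1}^{(a)}$ with $\alpha_n=d^{\,n-1}A_n$ in part (1), and $\alpha_n=d\,\alpha_{n-1}+d\ln(k-1)$ with $\alpha_n=d\ln(k-1)\frac{d^n-1}{d-1}$ in part (2), and the bound $1\le r_j^{(a)}\le 2^d$ does give convergence of $A_n$.

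The gap is exactly the step you defer to the end: you never actually evaluate $\lim_n \alpha_n/|E_n|$, and your proposed substitute --- checking that the normalization specializes correctly at $d=2$ --- cannot fix a constant whose whole content is its $d$-dependence. Worse, if you carry out the computation you describe, it does not return the displayed formulae. With $|E_n|=\frac{d^{n+1}-1}{d-1}$ you get $d^{\,n-1}/|E_n|\to \frac{d-1}{d^2}$, so part (1) comes out as $\frac{(d-1)A_\infty}{d^2}$, and in part (2) you get $\alpha_n/|E_n|\to \frac{d\ln(k-1)}{d}=\ln(k-1)$; these agree with $\frac{A_\infty}{4}$ and with $\frac12\ln a_1$ at $d=2$ (where $d-1=1$), which is why the $d=2$ consistency check is silently passed, but they differ from $\frac{A_\infty}{d^2}$ for $d>2$ and from $d\ln(k-1)$ for $k>2$. (For the chessboard the elementary count $|B_n|=k(k-1)^{|E_n|-1}$ confirms $h=\ln(k-1)$, and Proposition \ref{Prop: 2} itself gives $\frac1d\ln a_1=\ln(k-1)$.) So as written your proposal does not prove the statement: either you must supply the missing limit computation and obtain constants that disagree with the corollary, or you must explain why a different normalization of $|E_n|$ is intended. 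For what it is worth, the paper's own proof is a one-line appeal to ``the identical argument as Proposition \ref{Prop: 3}'' and never displays this limit either, so the discrepancy you would uncover is present there as well; but a complete proof has to confront it rather than outsource it to a $d=2$ spot-check.
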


\begin{proof}
It follows from Theorem \ref{Thm: 1} that $X_{A}\in \mathbf{D}$ and $a$ is
the dominate symbol. Under the identical argument as the proof of
Proposition \ref{Prop: 3}, we have $h(X_{A})=\frac{A_{\infty }}{d^{2}}$,
where $A_{\infty }=\lim_{n\rightarrow \infty }A_{n}=d\ln
2+\lim_{n\rightarrow \infty }\sum_{j=1}^{n-1}d^{-j}\ln r_{j}^{(a)}$. If $%
X_{A}$ is a $k$-colored chessboard, its SNRE is of type $\mathbf{E}$. It
follows from Proposition \ref{Prop: 2} that $h(X_{A})=d\ln (k-1)$. This
completes the proof.
\end{proof}

The method developed in $S_{2}$-GMS can also be applied to the traditional
one-dimensional GMS.

\begin{example}
Let $X_{A}$ be the golden mean shift of $(d,k)=(1,2)$. That is, $%
X_{A}=\{x\in \{1,2\}^{\mathbb{N}\cup \{0\}}:A(x_{i},x_{i+1})=1$ for $i\geq
1\}$, where $A=\left( 
\begin{array}{cc}
1 & 1 \\ 
1 & 0%
\end{array}%
\right) $. The SNRE (\ref{15}) gives 
\begin{equation}
\left\{ 
\begin{array}{c}
a_{n}=a_{n-1}+b_{n-1}, \\ 
b_{n}=a_{n-1}, \\ 
a_{1}=2\text{ and }b_{1}=1.\text{ }%
\end{array}%
\right.  \label{27}
\end{equation}%
Note that (\ref{27}) is an linear recursive equation, it follows from (\ref%
{16}) we have 
\begin{eqnarray*}
\ln a_{n} &=&A_{n}=\ln 2+\sum_{j=1}^{n-1}\ln r_{j}^{(a)}=\ln 2+\ln
\prod_{j=1}^{n-1}r_{j}^{(a)} \\
&=&\ln \prod_{j=1}^{\infty }2\left( 1+\frac{b_{j}}{a_{j}}\right) =\ln
2\left( 1+\frac{1}{2}\right) \left( 1+\frac{2}{3}\right) \cdots \left( 1+%
\frac{b_{n-1}}{a_{n-1}}\right) \text{.}
\end{eqnarray*}%
In this case $\left\vert E_{n}\right\vert =n,$ thus 
\begin{eqnarray*}
h(X_{A}) &=&\lim_{n\rightarrow \infty }\frac{A_{n}}{n}=\lim_{n\rightarrow
\infty }\frac{\ln a_{n}}{n} \\
&=&\lim_{n\rightarrow \infty }\frac{1}{n}\ln 2\left( 1+\frac{1}{2}\right)
\left( 1+\frac{2}{3}\right) \cdots \left( 1+\frac{b_{n-1}}{a_{n-1}}\right) \\
&=&\lim_{n\rightarrow \infty }\frac{1}{n}\ln \left( a_{n-1}+b_{n-1}\right)
=\lim_{n\rightarrow \infty }\frac{1}{n}\ln a_{n}=\ln g\text{,}
\end{eqnarray*}%
where $a_{n}$ is the $n$-th Fibonacci number and $g=\frac{1+\sqrt{5}}{2}$.
\end{example}

\subsection{Golden mean shifts on free groups}

In this section we restrict our discussion to the entropy formula of $F_{d}$%
-GMSs. The previous results on $S_{d}$-SFTs can be applied to solve the
entropy formula of the $F_{d}$-GMS. Let $2\leq d,k\in \mathbb{N}$ and $%
q=2d-1 $.

\begin{theorem}
\label{Thm: 5}Let $X$ be the $F_{d}$-GMS, then we have $h(X)=\frac{A_{\infty
}}{q^{2}}$, where 
\begin{equation*}
A_{\infty }=q\ln k+\lim_{n\rightarrow \infty }\sum_{j=1}^{n-1}q^{-j}\ln
r_{j}^{(a)}\text{.}
\end{equation*}
\end{theorem}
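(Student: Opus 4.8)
The plan is to realize the $F_{d}$-GMS as the $S_{q}$-GMS with branching number $q=2d-1$ and then to invoke Corollary \ref{Cor: 1}. The guiding observation is geometric: the Cayley graph of $F_{d}$ is the $2d$-regular tree, so every ball $E_{n}$ is itself a finite tree. Rooting it at the identity $e$, the root is incident to $2d=q+1$ edges, whereas every other vertex is incident to one edge toward its parent and exactly $q=2d-1$ edges toward its children. Hence $E_{n}$ is the root $e$ together with $q+1$ pairwise-disjoint copies of the depth-$(n-1)$ ball of the $q$-ary tree, and each such subtree is colored exactly by the $S_{q}$-GMS rule. First I would record, as in the proof of Theorem \ref{Thm: 1}, the SNRE for the $S_{q}$-GMS on $k$ symbols: writing $a_{n}$ (resp.\ $b_{n}$) for the number of admissible colorings of a depth-$n$ $q$-ary subtree whose root carries a free symbol (resp.\ the restricted symbol $k$),
\begin{equation*}
a_{n}=\bigl((k-1)a_{n-1}+b_{n-1}\bigr)^{q},\qquad b_{n}=\bigl((k-1)a_{n-1}\bigr)^{q},
\end{equation*}
with $a_{1}=k^{q}$ and $b_{1}=(k-1)^{q}$. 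Since a free symbol admits every continuation while $k$ forbids one, induction gives $a_{n}\ge b_{n}$, so $a$ is a dominate symbol and the SNRE is of type $\mathbf{D}$.

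Next I would count $\left\vert B_{n}(X)\right\vert$ by conditioning on the color of $e$. A free root lets each of its $q+1$ subtrees range over all continuations and the restricted root $k$ over the free ones, so using $a_{n}=((k-1)a_{n-1}+b_{n-1})^{q}$ and $b_{n}=((k-1)a_{n-1})^{q}$,
\begin{equation*}
\left\vert B_{n}(X)\right\vert=(k-1)\,a_{n}^{(q+1)/q}+b_{n}^{(q+1)/q}.
\end{equation*}
Because $a$ dominates, the ratio $\left\vert B_{n}(X)\right\vert/a_{n}^{(q+1)/q}$ stays in $[1,k]$, so $\ln\left\vert B_{n}(X)\right\vert=\tfrac{q+1}{q}\ln a_{n}+O(1)$; this is the exact analogue of Lemma \ref{Lma: 2}, and it is the step that absorbs the single irregular edge at the root into a bounded correction. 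In effect the $F_{d}$-GMS and the $S_{q}$-GMS share the same growth rate, differing only in lower-order terms.

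I would then solve the dominating recursion precisely as in Proposition \ref{Prop: 3}. Setting $\alpha_{n}=\ln a_{n}$ and factoring the leading monomial $a_{n-1}^{q}$ out of $F^{(a)}$ gives $\alpha_{n}=q\alpha_{n-1}+\ln r_{n-1}^{(a)}$ with $r_{n-1}^{(a)}=\bigl((k-1)+b_{n-1}/a_{n-1}\bigr)^{q}$, so by iteration
\begin{equation*}
\ln a_{n}=q^{\,n-1}A_{n},\qquad A_{n}=q\ln k+\sum_{j=1}^{n-1}q^{-j}\ln r_{j}^{(a)}.
\end{equation*}
The uniform bound $1\le r_{j}^{(a)}\le k^{q}$, the analogue of (\ref{23}), makes the series converge, so $A_{n}\to A_{\infty}$. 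Using $\left\vert E_{n}\right\vert=1+(q+1)\frac{q^{n}-1}{q-1}$, the normalized limit (\ref{17}) is then identical to the computation in Corollary \ref{Cor: 1}(1) with $d$ replaced by $q$ throughout and $a_{1}=k^{q}$ in place of $a_{1}=2^{q}$, and it yields $h(X)=A_{\infty}/q^{2}$ with $A_{\infty}=q\ln k+\lim_{n}\sum_{j=1}^{n-1}q^{-j}\ln r_{j}^{(a)}$.

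The hardest part will be the second paragraph: justifying rigorously that the lone extra branch at the identity, together with the fact that $F_{d}$ is a group rather than a semigroup, perturbs the count only at lower order, so that the limit (\ref{17}) exists for the $F_{d}$-GMS and coincides with the $S_{q}$-GMS value. A secondary technical point is to extend Corollary \ref{Cor: 1} from $k=2$ to arbitrary $k$, which amounts to re-running the type-$\mathbf{D}$ analysis with the initial datum $a_{1}=k^{q}$; the remainder of Proposition \ref{Prop: 3} and the convergence argument via (\ref{16}) and (\ref{23}) then apply verbatim.
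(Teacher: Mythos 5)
Your proposal follows essentially the same route as the paper's proof: decompose the radius-$n$ ball of $F_{d}$ into the root together with $2d=q+1$ copies of the depth-$(n-1)$ ball of the $q$-ary tree, express the ball count through the $S_{q}$-GMS SNRE, use the dominate-symbol (type $\mathbf{D}$) bound to absorb the irregularity at the root into an $O(1)$ correction (the paper's $\widehat{a}_{n}=(a_{n-1}+b_{n-1})^{2d}$, $\widehat{b}_{n}=a_{n-1}^{2d}$ is exactly your $|B_{n}(X)|=(k-1)a_{n}^{(q+1)/q}+b_{n}^{(q+1)/q}$ at $k=2$), and then iterate $\alpha_{n}=q\alpha_{n-1}+\ln r_{n-1}^{(a)}$ and normalize by $|G_{n}|$. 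The only substantive difference is that you write out the recursion for general $k$, whereas the paper treats $k=2$ ``for clarity'' and defers the general case to an analogous argument.
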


\begin{proof}
For clarity, we focus on the case where $k=2$. Since $k=2$, we may assume that $\mathcal{A}%
=\{1,2\}$ and let $\widehat{a}_{n}$ (resp. $\widehat{b}_{n}$) be the number
of configurations of $X$ on $G_{n}=\{g\in F_{d}:\left\vert g\right\vert
\leq n\}$ with the root $\epsilon $ being colored by $1$ (resp. $2$). We
also denote by $a_{n}$ (resp. $b_{n}$) the number of configurations of $X$ on $E_{n}=\{g\in S_{q}:\left\vert g\right\vert \leq n\}$ with the root
being colored by $1$ (resp. $2$). Since $q=2d-1$, we have the following
formulae%
\begin{equation}
\left\{ 
\begin{array}{c}
\widehat{a}_{n}=a_{n-1}^{2d}+C_{1}^{2d}a_{n-1}^{2d-1}b_{n-1}+\cdots
+C_{1}^{2d}a_{n-1}b_{n-1}^{2d-1}+b_{n-1}^{2d}, \\ 
\widehat{b}_{n}=a_{n-1}^{2d},%
\end{array}%
\right.   \label{12}
\end{equation}%
and 
\begin{equation}
\left\{ 
\begin{array}{c}
a_{n}=a_{n-1}^{q}+C_{1}^{q}a_{n-1}^{q-1}b_{n-1}+\cdots
+C_{1}^{q}a_{n-1}b_{n-1}^{q-1}+b_{n-1}^{q}, \\ 
b_{n}=a_{n-1}^{q}.%
\end{array}%
\right.   \label{11}
\end{equation}%
Let $F=\{F^{(a)},F^{(b)}\}$ be the SNRE (\ref{11}), we know that $F\in 
\mathbf{D}$ (Theorem \ref{Thm: 1}). Then we have $\left\vert
E_{n}\right\vert =\sum_{i=0}^{n}q^{i}=\frac{q^{n+1}-1}{q-1}$ and $\left\vert
G_{n}\right\vert =1+2d\left\vert E_{n-1}\right\vert $. Since $a$ is a
dominate symbol, it follows from Lemma \ref{Lma: 2} and (\ref{12}) that 
\begin{eqnarray*}
h(X) &=&\lim_{n\rightarrow \infty }\frac{\ln \left( \widehat{a}_{n}+\widehat{%
b}_{n}\right) }{\left\vert G_{n}\right\vert }=\lim_{n\rightarrow \infty }%
\frac{\ln \widehat{a}_{n}}{\left\vert G_{n}\right\vert } \\
&=&\lim_{n\rightarrow \infty }\frac{\ln \left( a_{n-1}^{2d}+\cdots
+b_{n-1}^{2d}\right) }{1+2d\left\vert E_{n-1}\right\vert } \\
&=&\lim_{n\rightarrow \infty }\frac{2d\ln a_{n-1}}{\left\vert
E_{n-1}\right\vert }\left( \frac{\left\vert E_{n-1}\right\vert }{%
1+2d\left\vert E_{n-1}\right\vert }\right) =\lim_{n\rightarrow \infty }\frac{%
\ln a_{n-1}}{\left\vert E_{n-1}\right\vert }\text{.}
\end{eqnarray*}%
Proposition \ref{Prop: 3} is applied to show that $\lim_{n\rightarrow \infty
}\frac{\ln a_{n-1}}{\left\vert E_{n-1}\right\vert }=\frac{A_{\infty }}{q^{2}}
$, where 
\begin{equation*}
A_{\infty }=\lim_{n\rightarrow \infty }A_{n}=q\ln k+\lim_{n\rightarrow
\infty }\sum_{j=1}^{n-1}q^{-j}\ln r_{j}^{(a)}\text{.}
\end{equation*}%
Thus $h(X)=\frac{A_{\infty }}{q^{2}}$. This completes the proof.
\end{proof}

\section{Conclusion and open problems}

List results of this paper as follows.

\begin{enumerate}
\item Four types, namely $\mathbf{E}$, $\mathbf{D}$, $\mathbf{C}$, $\mathbf{O%
}$ types, are introduced. Their entropy formulae are presented in
Proposition \ref{Prop: 2}, Proposition \ref{Prop: 3}, Proposition \ref{Prop:
4}, and Proposition \ref{Prop: 5} respectively. Furthermore, the set of all
SNREs with $(d,k)=(2,2)$ is equal to $\mathbf{E}\cup \mathbf{D}\cup \mathbf{C%
}\cup \mathbf{O}$ (Theorem \ref{Thm: 3}). This gives a complete
characterization for the entropies of $S_{2}$-SFTs with two symbols.

\item Two types of hom-shifts on $S_{d}$, the $S_{d}$-GMS and $k$-colored
chessboard on $S_{d}$, are introduced. The entropy formulae of these two
types are presented (Theorem \ref{Thm: 1}). The entropy formula of the $%
F_{d} $-GMS is also developed by using the method of entropy theory on $S_{d}
$-SFTs (Theorem \ref{Thm: 5}).
\end{enumerate}

Although we give the characterization for $(d,k)=(2,2)$, the general entropy
formula for arbitrary $(d,k)\in \mathbb{N}\times \mathbb{N}$ is far from
being solved. We list some possible problems in the future study.

\begin{problem}[Priori criterion for an SNRE]
In Section 3, we introduce four types of SNRE on which we can compute their
entropies explicitly. But the following criteria seem more important for $%
\left\vert \mathcal{A}\right\vert \geq 3$.

\begin{enumerate}
\item How to check if $F$ admits a dominate symbols?

\item How to check if $F$ belongs to the oscillating type?

\item Can we characterize all SNREs completely for arbitrary $\left\vert 
\mathcal{A}\right\vert $?
\end{enumerate}
\end{problem}

\begin{problem}[zero order estimate]
Let $X^{\mathcal{B}}$ be an $F_{d}$-SFT or $S_{d}$-SFT. Recall that the
value $\kappa $ (resp. $\lambda _{2}$ and $\lambda _{1}$) is the $2^{nd}$%
-order (resp. $1^{st}$-order and zero-order) speed of $B_{n}(X^{\mathcal{B}%
}) $ (see (\ref{18})). We have shown so far that there exists an algorithm
to compute $\kappa $ \cite{ban2017tree} and $\lambda _{2}$. However, the
number $\lambda _{1}$ seems crucial for the accuracy of the number $%
\left\vert B_{n}(X^{\mathcal{B}})\right\vert $. In \cite%
{piantadosi2008symbolic}, the author shows that the value $\lambda _{1}$ may
not exist in the case of $F_{d}$-GMS. We may ask: under which conditions $%
\lambda _{1}$ exists, and how to compute it? Another question is whether there exists an intermediate order between $\lambda_1$ and $\lambda_2$.
\end{problem}

\begin{problem}[Entropy of SFTs on free groups ]
In Section 4, we study the entropy of $F_{d}$-GMS. However, the method can
not apply to general $F_{d}$-SFTs. One of the reasons is that the SNRE (\ref%
{12}) and (\ref{11}) depend on the length of $E_{n}$ of the groups $S_{q}$
(but the case of $F_{d}$-GMS will not). So, what is the entropy formula for
an $F_{d}$-SFT? Furthermore, if $G$ is a group which is not free, how to
compute the entropy of the $G$-SFT?
\end{problem}

\bibliographystyle{amsplain}
\bibliography{ban}


\end{document}